\theoremstyle{plain}
\newtheorem{theorem}{Theorem}[section]
\newtheorem{corollary}[theorem]{Corollary}
\newtheorem{lemma}[theorem]{Lemma}
\newtheorem{proposition}[theorem]{Proposition}
\theoremstyle{definition}
\newtheorem{definition}[theorem]{Definition}
\theoremstyle{remark}
\newtheorem{remark}[theorem]{Remark}
\newtheorem{remarks}[theorem]{Remarks}
\numberwithin{equation}{section}
\dedicatory{In memory of Gerrit van Dijk}
\title[Limits of Bessel functions as the rank tends to infinity]
{Limits of Bessel functions for root systems as the rank tends to infinity }
\author{Dominik Brennecken and Margit R\"osler} 
\address{Institut f\"ur Mathematik, Universit\"at Paderborn, Warburger Str. 100, D-33098 Paderborn, Germany}
\email{bdominik@math.upb.de,  roesler@math.upb.de}
\subjclass[2020]{Primary 33C67; Secondary 33C52, 43A90, 22E66}
\keywords{Dunkl theory, Bessel functions, asymptotic harmonic analysis}
\begin{document}

\begin{abstract} We study the asymptotic behaviour of Bessel functions associated to root systems of type $A_{n-1}$ and type $B_n$ with positive multiplicities as the rank $n$ tends to infinity. In both cases, we characterize the possible limit functions and the Vershik-Kerov type sequences of spectral parameters for which such limits exist. 
In the type $A$ case, this gives a new and very natural approach to recent results by Assiotis and Najnudel in the context of $\beta$-ensembles in random matrix theory. These results
generalize known facts about the approximation of the positive-definite Olshanski spherical functions of the space of infinite-dimensional Hermitian matrices over $\mathbb F = \mathbb R, \mathbb C, \mathbb H $ (with the action of the associated infinite unitary group) by spherical functions of finite-dimensional spaces of Hermitian matrices.  In the type B case, our results include asymptotic results for the spherical functions associated with the Cartan motion groups of non-compact Grassmannians as the rank goes to infinity, and  a classification of the Olshanski spherical functions of the associated inductive limits. 
\end{abstract}

\maketitle

\section{Introduction}

The asymptotic analysis of multivariate special functions  has a long tradition in infinite dimensional harmonic analysis, tracing back to the work of Olshanski, Vershik, and Kerov, see \cite{Ol90, OV96, VK82}. 
Of particular interest in this context are the behaviour of spherical representations and the limits of spherical functions of increasing families of Gelfand pairs as specific dimensions tend to infinity.  

Bessel functions associated with root systems generalize the spherical functions of Riemannian symmetric spaces of Euclidean type, which occur for special values of the multiplicity parameters. 
They appear naturally in rational Dunkl theory, with an intimate connection to  the Dunkl kernel and the associated harmonic analysis. We refer to \cite{Op93} for a general treatment of such Bessel functions and to \cite{Ro03, dJ06, RV08, DX14} for an overview of rational Dunkl theory including the connection with symmetric spaces.
There are two classes of particular interest, including applications to  $\beta$-ensembles in random matrix theory, namely those of type $A_{n-1}$ and type $B_n$.
We refer to \cite{Fo10} for a general background and to \cite{BCG22} for some recent developments.   In the  cases of type $A$ and $B,$ the Bessel functions  can be expressed as hypergeometric series involving Jack polynomials, c.f. Section \ref{Bessel}.   Bessel functions of type $A_{n-1}$ have a continuous multiplicity parameter $k \geq 0$ and include as special cases the spherical functions of  
the motion groups  $U_n(\mathbb F)\ltimes \text{Herm}_n(\mathbb F)$ over $\mathbb F = \mathbb R, \mathbb C$ or $\mathbb H$, where the unitary group $U_n(\mathbb F)$ acts by conjugation on the space $\text{Herm}_n(\mathbb F)$ of Hermitian matrices over $\mathbb F$. These cases correspond to $k = \frac{d}{2}$ with $d = \text{dim}_\mathbb R \mathbb F \in \{1, 2 ,4\}.$ Bessel functions of type $B_q$ have non-negative multiplicity parameters of the form $\kappa = (k^\prime, k)$, with $k$ the multiplicity  on the roots $\pm(e_i\pm e_j)$ and $k^\prime$ that on the roots $\pm e_i$. They generalize the spherical  functions of the motion groups  $(U_p(\mathbb F) \times U_q(\mathbb F))\ltimes M_{p,q}(\mathbb F),$ with  $p\geq q.$  Here the multiplicities are $k = \frac{d}{2}, \, k^\prime = \frac{d}{2}(p-q+1) - \frac{1}{2}.$
In \cite{RV13}, limits of the spherical functions 
of these motion groups as $p\to \infty$ and the associated Olshanski spherical pairs were studied, where the rank $q$ remained fixed.


In the present paper, we study Bessel functions of type $A_{n-1}$ and type $B_n$ with arbitrary positive multiplicities and imaginary spectral parameters as the rank tends to infinity. We characterize the sequences of spectral parameters for which a limit exists, and we uniquely parametrize the sets of possible limit functions.

In the group cases, the obtained limits are 
exactly the (positive definite) Olshanski spherical functions of the associated infinite dimensional motion groups $U_\infty(\mathbb F) \ltimes \text{Herm}_\infty(\mathbb F)$ and $(U_\infty(\mathbb F)\times U_\infty(\mathbb F)) \ltimes M_{\infty}(\mathbb F),$ which are obtained as inductive limits of the above finite dimensional motion groups. These Olshanski spherical functions were already determined by Pickrell \cite{Pi91} (Section 5). In the case of $U_\infty(\mathbb C) \ltimes \text{Herm}_\infty(\mathbb C),$ their approximation by positive definite spherical functions of the corresponding finite-dimensional Cartan motion groups is a classical result of Olshanski and Vershik  \cite{OV96}, 
where the limits showed up as the characteristic functions of the ergodic measures on the space of  infinite complex Hermitian matrices with respect to the action of $U_\infty(\mathbb C).$ The same method was used in \cite{Ra08} to obtain  approximations of the Olshanski spherical functions of $(U_\infty(\mathbb C)\times U_\infty(\mathbb C)) \ltimes M_{\infty}(\mathbb C)$ by spherical functions of $(U_n(\mathbb C) \times U_n(\mathbb C))\ltimes M_n(\mathbb C).$ 
 In \cite{Bo07}, the limit results of \cite{OV96} were extended,   by somewhat different methods, to $\mathbb F= \mathbb R$ and $\mathbb H.$ In the context of $\beta$-ensembles from random matrix theory, Assiotis and Najnudel \cite{AN21} extended the approximation results of \cite{OV96} for the group cases of type $A$ to 
 Bessel functions of type $A$ with arbitrary positive multiplicity parameter $k = \beta/2.$  Their approach is probabilistic and again different from that of \cite{OV96}, and it provides a larger, complex domain of convergence in the approximation.  

For Bessel functions of type $A$,  we mainly recover the results of Assiotis and Najnudel \cite{AN21}. However, we follow the method 
of \cite{OV96} and obtain very natural direct proofs based on the expansion of the Dunkl-type Bessel functions in terms of Jack polynomials. These results,  in the spirit of the work of Okounkov and Olshanski \cite{OO98}, are contained 
in Theorem \ref{main}, the main result of Section \ref{type A}.  

To become  precise, 
we consider the Bessel functions $J_{A_{n-1}}(i\lambda(n),(z, 0, \ldots, 0))$  with fixed multiplicity $k>0$ and $z \in \mathbb C^r$ for sequences of spectral parameters $\lambda(n) \in \mathbb R^n$ as $n\to \infty. $
Following \cite{OO98, Fa08, AN21}, we characterize those sequences $(\lambda(n))_{n\in \mathbb N}$ for which the associated sequence of Bessel functions converges, in terms of specific  real parameters $\alpha = (\alpha_i)_{i\in \mathbb N},\, \beta, \gamma$ with $\gamma\geq 0.$ These parameters describe the growth of $(\lambda(n))$ (a so-called Vershik-Kerov-sequence) as $n \to \infty.$ 
We obtain that 
\begin{equation}\label{limitres_A} \lim_{n\to \infty} J_{A_{n-1}}(i\lambda(n),z) \,=\, \prod_{j=1}^\infty e^{i\beta z_j - \frac{\gamma}{2k}z_j^2} \prod_{l=1}^\infty \frac{e^{-i\alpha_l z_j}}{\bigl(1- \frac{i\alpha_l z_j}{k}\bigr)^k}\,,
\end{equation}
where for each $r\in \mathbb N,$ the convergence is locally uniform
 in the complex domain 
$$ \Big\{ z\in \mathbb{C}^{r} : \|\text{Im}\,z\|_\infty <\frac{k}{r|\alpha_1|} \Big\}.$$
This coincides with the result of \cite{AN21}. Our proof becomes even simpler if all entries of $\lambda(n)$ are non-negative, which is described in Remark \ref{nonneg}. Moreover, in Proposition \ref{9.12} we uniquely characterize the set of possible limit functions in \eqref{limitres_A}. 
 In the group cases $k = \frac{d}{2}$,  the limit functions  are products of Polya functions (in the sense of \cite{Fa08}) and determine the positive-definite Olshanski spherical functions of the
   pairs $(U_\infty(\mathbb F)\ltimes \text{Herm}_\infty(\mathbb F), U_\infty(\mathbb F)),$ c.f. \cite{Pi91}. They are approximated, uniformly on compact sets,  by sphercial functions of the associated finite-dimensional Gelfand pairs.

\medskip
Our approach for Bessel functions of type $A$ extends to the type $B$ case in a natural way, where the Bessel functions also have an explicit Jack polynomial expansion. In this case, matters are even a bit easier, because one may restrict to non-negative Vershik-Kerov sequences, contained in Weyl chambers of type $B$. This is done in Section \ref{type B}. 
To become precise again, we consider Bessel functions $J_{B_n}(\kappa_n, i\lambda(n),(z,0, \ldots, 0))\,$ for $n \to \infty,$ where $\lambda(n) \in \mathbb R^n$ and the 
multiplicity is of the form $\kappa_n = (k_n^\prime, k)$ with $k_n \geq 0, k>0.$ Here the first multiplicity parameter may also vary with $n$, which is motivated by the geometric cases. 
Again, we characterize the sequences of spectral parameters for which a limit exists; the condition now being that the sequence $\bigl(\frac{\lambda(n)^2}{\nu_n}\bigr)_{n\in \mathbb N}$  has to be Vershik-Kerov, where $\nu_n = k_n^\prime +k(n-1) + \tfrac{1}{2}.$
We obtain limits of the form
$$
\lim_{n\to \infty} J_{B_{n}}(\kappa_n; i\lambda(n),z) \, =\,  \prod_{j=1}^\infty e^{-\frac{\beta z_j^2}{4}} \prod_{l=1}^\infty \frac{e^{\frac{\alpha_l z_j^2}{4}}}{\bigl(1+ \frac{\alpha_l z_j^2}{4k}\bigr)^k}\,,\qquad 
$$
with specific  non-negative parameters $\alpha = (\alpha_l)_{l\in \mathbb N} $ and $\beta$ satisfying $\sum_{l=1}^\infty \alpha_l \leq \beta. $
The convergence is uniform on compact subsets of the complex domain
$$  \Big\{ z\in \mathbb{C}^{(\infty)} : \|\text{Im}\,z\|_\infty < \, 2\,\sqrt{ \frac{k}{\alpha_1}} \,\Big\}.$$
This is contained in Theorem \ref{main_B}, the main result of Section \ref{type B}.  The possible range of parameters $\alpha, \beta$ is determined in Proposition \ref{Omega_plus}. 
It finally turns out that for $k = \frac{d}{2}$ with $d = 1,2,4$,  the above limits can be precisely identified with  the positive definite Olshanski spherical functions of spherical pairs $(G_\infty, K_\infty),$ which are obtained as inductive limits of the motion groups 
$(U_p(\mathbb F) \times U_q(\mathbb F))\ltimes M_{p,q}(\mathbb F)$
as both dimension parameters $p, q$ (with $p\geq q$)  tend to infinity. Moreover, these Olshanski spherical functions are explicitly approximated by spherical functions of the corresponding finite dimensional spaces. 
Our parametrization of the Olshanski sphercial functions 
slightly differs from that of \cite{Pi91, Ra08} (but is equivalent). 

We finally mention that certain limits of type $B$ Bessel  functions and methods from Dunkl theory also play a role 
in \cite{Xu23}, where the distribution of the singular values of sums of rectangular matrices  is studied for low and high temperatures, i.e. in the limits $k \to 0$ and $k \to \infty.$

\section*{Acknowledgements}
The authors received funding from the  Deutsche Forschungsgemeinschaft German Research Foundation (DFG), via RO 1264/4-1 and SFB-TRR  358/1 2023 - 491392403 (CRC Integral Structures in Geometry and Representation Theory). We are indebted to one of the referees for bringing the paper \cite{AN21} to our attention, and for some further helpful comments. 

\section{Bessel functions of type $A$ and type $B$ }\label{Bessel}

In this section, we recall some basic facts about Bessel 
functions in Dunkl theory. We shall not go into details, but refer the reader to  \cite{DX14,  Ro03}  for some general background on Dunkl theory, and to \cite{Op93, dJ06, RV08} for Dunkl-type Bessel functions and their relevance in the symmetric space context.
For a reduced (but not necessarily essential) root system $R \subset \mathbb R^n$ we fix a non-negative multiplicity function $k: R \mapsto [0,\infty),$ i.e. $k$ is invariant under the associated Weyl group $W.$ Let $E = E_{R}(k;\,.\,,\,.\,) : \mathbb C^n \times\mathbb C^n \to \mathbb C$ denote the associated Dunkl kernel, where $E(\lambda, \,.\,)$ is characterized as the unique analytic solution of the joint eigenvalue problem for the associated rational Dunkl operators with spectral variable $\lambda \in \mathbb C^n$, normalized according to $E(\lambda,0)=1.$  The kernel $E$ is holomorphic on $\mathbb C^n \times \mathbb C^n$ and symmetric in its arguments. 
For each $\lambda \in \mathbb R^n$, there exists a compactly supported probability measure 
$\mu_\lambda $ on $\mathbb R^n$ such that 
\begin{equation}\label{int_rep} E(i\lambda,z) = \int_{\mathbb R^n} e^{i\langle \xi, z\rangle} d\mu_{\lambda}(\xi) \quad \text{ for all } z \in \mathbb C^n,\end{equation}
where $\langle w,z \rangle = \sum_{j=1}^n w_jz_j\,.$ Let us emphasize that the existence of such an integral representation hinges on the non-negativity of  $k.$ 
 If $k=0,$ then $E(i\lambda, z) = e^{i\langle \lambda, z\rangle}.$
The Bessel function associated with $R$ and $k$ is defined by
$$ J(\lambda,z)  = \frac{1}{|W|} \sum_{w\in W} E(\lambda, wz).$$
It is $W$-invariant in both arguments. Note that in view of \eqref{int_rep}, \begin{equation}\label{J_estimate} \vert J(i\lambda,z)\vert \leq \, J(\lambda, - {\rm Im} \,z) \quad \text{ for } \lambda \in \mathbb R^n.\end{equation}

 We shall be concerned with the root systems $ A_{n-1}= \{\pm(e_i-e_j): 1 \leq i < j \leq n\}\subset \mathbb R^n$ and $B_n= \{\pm e_i, \pm(e_i\pm e_j): 1\leq i < j \leq n\}\subset \mathbb R^n, $ where $(e_i)$ denotes the standard basis of $\mathbb R^n.$ In both
cases, the Bessel functions can be written as hypergeometric series in terms of Jack polynomials. 
For $A_{n-1},$ the multiplicity function is given by a single parameter $ k \in [0, \infty).$ 
We write $\Lambda_n^+$ for the set of partitions $\kappa = (\kappa_1, \kappa_2, \ldots \,)$ of length $l(\kappa)\leq n$ and denote by  $C_\kappa^{(n)}, \, \kappa \in \Lambda_n^+$  the (symmetric) Jack polynomials in $n$ variables of index $\alpha = \frac{1}{k} \in [0, \infty],$ normalized such that
$$ \sum_{|\kappa|= m} C_\kappa^{(n)}(z) = (z_1 + \ldots + z_n)^m, \quad m \in \mathbb N_0.$$
The Jack polynomials are stable with respect to the number of variables, i.e. for $\kappa \in \Lambda_{r}^+$ with $r<n$ we have
\begin{equation}\label{stability} C_\kappa^{(r)}(z_1, \ldots, z_{r}) =\begin{cases} 
	C_{\kappa^\prime}^{(n)}(z_1, \ldots, z_{r}, \underline 0_{n-r}) & \text{ if } \kappa^\prime = (\kappa, 0,...)\\
	\,0 & \text{ otherwise};
	\end{cases}\end{equation}
with the notation $\underline a_{j}:= (a, \ldots, a)\in \mathbb C^j$ for $a\in \mathbb C.$   See \cite[Prop. 2.5]{St89}  together with \cite[formula(16)]{Ka93}.	
Therefore the Jack polynomials $C_\kappa^{(n)}$ uniquely extend to continuous functions
$C_\kappa$  on $\,\mathbb C^{(\infty)} = \bigcup_{n=1}^\infty \mathbb C^n, $ equipped with the inductive limit topology. We shall often consider elements from $\mathbb C^{(\infty)}$ as sequences $x=(x_n)_{n\in \mathbb N}$ in $\mathbb C$ with $x_n \not=0$ for at most finitely many $n;$ for $\mathbb R^{(\infty)}$ accordingly. 

By \cite{BF98}, the Bessel function of type $A_{n-1}$ with multiplicity $k$ is given by the Jack hypergeometric series
\begin{equation} \label{Bessel_A} J_{A_{n-1}}(\lambda,z) = \, _0F_0(\lambda,z)\,=\, \sum_{\kappa\in \Lambda_n^+} \frac{C_\kappa(\lambda) C_\kappa(z)}{|\kappa|!\, C_\kappa(\underline 1_n)}\end{equation}
with the Jack polynomials of index $\alpha = 1/k$ as above.

Bessel functions of type $A_{n-1}$ occur as the (zonal) spherical functions of 
the Gelfand pair $(G_n, K_n):= (U_n(\mathbb F) \ltimes \text{Herm}_n(\mathbb F), U_n(\mathbb F)),$  where the unitary group $U_n(\mathbb F)$ over $\mathbb F = \mathbb R, \mathbb C, \mathbb H$ acts by conjugation on the space  $\text{Herm}_n(\mathbb F)$ of Hermitian matrices over $\mathbb F.$ 
Recall that the spherical functions of a Gelfand pair $(G,K)$ 
can be characterized as the continuous, $K$-biinvariant, non-zero  functions $ \varphi$ on  $G$ satisfying the product formula
$$ \varphi(g) \varphi(h) = \int_K \varphi(gkh)dk \quad (g,h \in G).$$
The following characterization is possibly folklore, but not well documented in the literature. For the reader's convenience, we therefore provide a proof. 

\begin{lemma} \label{A_spherical_char} The spherical functions of $(G_n, K_n),$ considered as  $U_n(\mathbb F)$-invariant functions on $\text{Herm}_n(\mathbb F),$ are given by the Bessel functions 
$$ \varphi_\lambda(X) = J_{A_{n-1}}\bigl(\tfrac{d}{2}; i\lambda, \sigma(X)\bigr), \quad \lambda \in \mathbb C^n $$
where $d = \text{dim}_{\mathbb R}\mathbb F$ and 
$\sigma(X) \in \mathbb R^n$ denotes the eigenvalues of $X\in \text{Herm}_n(\mathbb F)$, decreasingly ordered by size and counted according to their multiplicity. Moreover, $\varphi_\lambda = \varphi_\mu$ iff there exists some $w \in S_n$ with $\mu = w.\lambda$,  and $\varphi_\lambda$ is positive definite iff $\lambda \in \mathbb R^n.$
\end{lemma}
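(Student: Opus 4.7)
The plan is to identify the spherical functions of $(G_n,K_n)$ with the Harish-Chandra-type orbital integrals over $U_n(\mathbb F)$, and then match these integrals with the Jack hypergeometric expansion \eqref{Bessel_A} of $J_{A_{n-1}}$.

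First, I will recall that $(G_n,K_n)$ is a Gelfand pair (standard for Cartan motion groups, via Gelfand's trick applied to the involution $X\mapsto -X$ on $\text{Herm}_n(\mathbb F)$) and that the $K_n$-biinvariant continuous functions on $G_n$ correspond naturally to the $U_n(\mathbb F)$-conjugation invariant continuous functions on $\text{Herm}_n(\mathbb F)\cong G_n/K_n$. Every unitary character of the abelian normal subgroup $\text{Herm}_n(\mathbb F)$ has the form $X\mapsto e^{i\,\text{tr}(\Lambda X)}$ with $\Lambda = \text{diag}(\lambda)$ Hermitian, and a direct computation using the left/right invariance of Haar measure on $U_n(\mathbb F)$ shows that the $U_n(\mathbb F)$-averages
\[
\psi_\lambda(X) \,=\, \int_{U_n(\mathbb F)} e^{i\,\text{tr}(\text{diag}(\lambda)\,\cdot\, uXu^{-1})}\,du, \qquad \lambda \in \mathbb C^n,
\]
are $K_n$-invariant, normalized to $1$ at $X=0$, and satisfy the required product formula $\psi_\lambda(X)\psi_\lambda(Y) = \int_{K_n}\psi_\lambda(X+ uYu^{-1})\,du$. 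By the standard Mackey analysis of the semidirect product (or by density of real $\lambda$ together with holomorphic extension), these exhaust the spherical functions of $(G_n,K_n)$, and $\psi_\lambda = \psi_\mu$ iff $\mu \in S_n\cdot \lambda$.

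Second, I will identify $\psi_\lambda(X)$ with $J_{A_{n-1}}(d/2;\,i\lambda,\sigma(X))$. Here two routes are available: (a) invoke the classical matrix-integral evaluation of Herz, James and Macdonald, which expresses $\psi_\lambda(X)$ precisely as the Jack series $_0F_0(i\lambda,\sigma(X))$ with Jack parameter $\alpha = 2/d$, hence by \eqref{Bessel_A} as the Bessel function; or (b) verify the identification directly by observing that both sides are $S_n$-symmetric and entire in $\lambda$, equal to $1$ at $\sigma(X)=0$, and joint eigenfunctions of the algebra of $S_n$-invariant constant coefficient differential operators on $\mathbb R^n$ with matching eigenvalues -- the rational Dunkl Laplacian at multiplicity $k=d/2$ coinciding, on $S_n$-invariants, with the radial part of the Euclidean Laplacian on $\text{Herm}_n(\mathbb F)$. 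Uniqueness of the joint spectral problem then forces equality.

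Third, the equality and positive-definiteness clauses are essentially formal consequences. Since $\varphi_\lambda$ is determined by its restriction to the diagonal, $\varphi_\lambda=\varphi_\mu$ is equivalent to agreement of $J_{A_{n-1}}(i\lambda,\cdot)$ and $J_{A_{n-1}}(i\mu,\cdot)$ on $\mathbb R^n$, hence to $\mu\in S_n\cdot\lambda$ by injectivity of the spectral variable of the Bessel kernel modulo the Weyl group. If $\lambda\in\mathbb R^n$, then \eqref{int_rep} together with $S_n$-averaging realizes $J_{A_{n-1}}(i\lambda,\cdot)$ as the Fourier transform of a compactly supported probability measure on $\mathbb R^n$, hence as a positive-definite function there, and this lifts to positive-definiteness of $\varphi_\lambda$ on $G_n$ via the semidirect product structure. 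For the converse, positive-definiteness forces $|\varphi_\lambda(X)|\leq 1$ for all $X$; combined with the growth of the Jack hypergeometric series along purely imaginary diagonal directions, this forces $\lambda\in\mathbb R^n$ modulo $S_n$. The principal obstacle, as I see it, is Step 2: while the matrix integral identification is classical, a self-contained proof requires either recalling a specific integral formula with careful normalization of Jack polynomials across $\mathbb F = \mathbb R,\mathbb C,\mathbb H$, or patiently verifying the eigenfunction characterization; the positive-definite converse in Step 3 is the secondary technical point.
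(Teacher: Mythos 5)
Your route is genuinely different from the paper's and is workable, but it rests on two citations that you gesture at rather than pin down. The paper proceeds by reduction to the semisimple case: it considers the Cartan motion pair $(SU_n(\mathbb F)\ltimes \text{SHerm}_n(\mathbb F), SU_n(\mathbb F))$ of $SL_n(\mathbb F)$, where the identification of all spherical functions with the Bessel functions $J_{A_{n-1}}(\frac d2; i\lambda,\cdot)$, $\lambda\in\mathbb C_0^n$, is available from de Jeu \cite{dJ06}; it then splits off the central direction $\mathbb R\underline 1_n$ by the functional equation (the restriction of a spherical function to the center is a character $e^{\alpha\langle x,\underline 1_n\rangle}$) and reassembles via the reduction formula \eqref{reduktiv} from \cite{BF98}; positive definiteness is quoted from Wolf \cite{Wo06}. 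You instead work directly on $\text{Herm}_n(\mathbb F)$ with the orbital integral $\int_{U_n(\mathbb F)}e^{i\operatorname{tr}(\Lambda uXu^{-1})}du$ and identify it with the $_0F_0$ Jack series at $\alpha=2/d$ via Herz--James--Macdonald or via the joint eigenfunction characterization for the symmetrized Dunkl operators. What your approach buys is that it avoids the slightly fiddly center-splitting argument; what the paper's approach buys is that every nontrivial identification is delegated to a single precise reference.

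Two points need shoring up. First, your claim that the orbital integrals ``exhaust the spherical functions'' for complex $\lambda$ does not follow from Mackey theory (which only yields the positive definite ones, i.e.\ real $\lambda$) nor from ``density of real $\lambda$ together with holomorphic extension'' --- the set of solutions of the product formula is not a priori a holomorphic family. The correct reference is Wolf's classification of spherical functions of Euclidean motion pairs \cite{Wo06} (or, equivalently, the characterization of spherical functions as normalized joint eigenfunctions of the invariant differential operators); since the paper already invokes \cite[Theorem 5.4]{Wo06} for the positive definite case, this is a citable fact, but it must be cited, not derived from Mackey. Second, your converse positive-definiteness argument (boundedness versus growth of the Jack series along imaginary directions) is plausible but nontrivial in higher rank; again the clean route is to quote Wolf, as the paper does. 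With those two references made explicit, your proof is correct.
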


\begin{proof}
Consider the Gelfand pair $(\widetilde G_n, \widetilde K_n) = (SU_n(\mathbb F) \ltimes \text{SHerm}_n(\mathbb F), SU_n(\mathbb F)).$ Note that $\widetilde G_n$ is the Cartan motion group of $SL_n(\mathbb F),$ which is connected and semisimple. Thus by \cite[Sect.6]{dJ06} (c.f. also \cite[Sect.3]{RV08}), the spherical functions of $(\widetilde G_n, \widetilde K_n)$ are given, as functions on $\text{SHerm}_n(\mathbb F),$ by 
$$ \widetilde\varphi_\lambda(X) = J_{A_{n-1}}\bigl(\tfrac{d}{2}; i\lambda, \sigma(X)\bigr), \quad \lambda \in \mathbb C_0^n:=\{z\in \mathbb C^n: z_1 + \ldots + z_n = 0\}. $$
Consider the mapping $\,\pi:\mathbb C^n \to \mathbb C_0^n,\, z \mapsto z - \frac{1}{n}\langle z, \underline 1_n\rangle \underline 1_n$, where again the standard inner product $\langle \, .\,, \, .\, \rangle$ on $\mathbb R^n$ is extended to $\mathbb C^n\times \mathbb C^n$ in a bilinear way. The restriction of $\pi$ to $\mathbb R^n$ is the orthogonal projection onto $\mathbb R_0^n = \mathbb R^n \cap \mathbb C_0^n.$ Then for $z,w \in \mathbb C^n$ and arbitrary multiplicity $k \geq 0,$ 
\begin{equation} \label{reduktiv} J_{A_{n-1}}(k;z,w)= e^{\langle z, \underline 1_n\rangle \langle w, \underline 1_n\rangle/n} \cdot J_{A_{n-1}}(k;\pi(z), \pi(w)).\end{equation}
This follows e.g. from \cite[Propos. 3.19]{BF98}.
Now suppose $\psi $ is a spherical function of $(G_n, K_n),$ considered as an $S_n$-invariant function on $\mathbb R^n.$ Then
$$ \psi (x) \psi (y) = \int_{K_n} \psi(x + kyk^{-1})d_nk,$$
where $x, y\in \mathbb R^n$ are identified with the corresponding $n\times n$-diagonal matrices. It follows that
$$ \psi(x) = \psi(x-\pi(x) + \pi(x)) = \psi(x-\pi(x)) \cdot \psi(\pi(x)) = e^{\alpha \langle x, \underline 1_n\rangle}\cdot \psi(\pi(x)),$$ 
where $\alpha \in \mathbb C$ is a constant and the restriction of $\psi$ to $\mathbb R_0^n$ is spherical for $(\widetilde G_n, \widetilde K_n).$ Conversely, it is easily checked that each spherical function of $(\widetilde G_n, \widetilde K_n)$ extends to a spherical function of $(G_n, K_n)$ in this way. 
The assertion now follows from \eqref{reduktiv}. The assertion concerning the positive-definite spherical functions follows from \cite[Theorem 5.4]{Wo06}. 
\end{proof}

For the root system $R=B_n$, we denote the multiplicity (by  slight abuse of notation) as $(k^\prime, k),$  where $k$ is the value on the roots $\pm (e_i \pm e_j)$ and $k^\prime $ is the value on $\pm e_i.$ Put $\nu_n := k^\prime + k(n-1)+ \frac{1}{2}\,.$ Then the Bessel function of type $B_n$ with multiplicity $(k^\prime, k)$ can be written as 
\begin{equation} \label{Bessel_B} 
J_{B_n}(\lambda, z) 	= \, _0F_1\bigl(\nu_n; \tfrac{\lambda^2}{2}, \tfrac{z^2}{2}\bigr) \, = \, \sum_{\kappa \in \Lambda_n^+} \frac{1}{4^{|\kappa|}[\nu_n]_\kappa}   \frac{C_\kappa(\lambda^2) C_\kappa(z^2)}{|\kappa|!\, C_\kappa(\underline 1_n)}
\end{equation}
with the hypergeometric series 
$$ _0F_1\bigl(\nu; \lambda, z\bigr)\, = \,\sum_{\kappa \in \Lambda_n^+} \frac{1}{[\nu]_\kappa} 
 \frac{C_\kappa(\lambda) C_\kappa(z)}{|\kappa|!\, C_\kappa(\underline 1_n)}\,.$$
Here the squares in the arguments are understood componentwise and again, the Jack polynomials are those of index $\alpha= 1/k.$ 
It is easily seen that both $\, _0F_0$ and $_0F_1$  converge locally uniformly on $\mathbb C^n\times \mathbb C^n$; c.f. \cite{BR23} for precise convergence properties of Jack hypergeometric series.

Bessel functions of type $B$ occur as the spherical functions of 
the Gelfand pairs $(G, K)$ with 
$$ G=(U_p(\mathbb F)\times  U_q(\mathbb F))\ltimes M_{p,q}(\mathbb F), \> K = U_p(\mathbb F)\times  U_q(\mathbb F), \quad p \geq q,$$
 where $M_{p,q}(\mathbb F)$ is the space of $p\times q$ matrices over $\mathbb F = \mathbb R, \mathbb C, \mathbb H$ and $K$ acts on $M_{p,q}(\mathbb F)$ via 
 $(U,V).X = UXV^{-1}.$ The group $G$ is the Cartan motion group of the non-compact Grassmann manifold $U(p,q;\mathbb F)/U_p(\mathbb F) \times U_q(\mathbb F)$ which is of rank $q$. 
  The spherical functions of $(G,K)$ may be considered as $K$-invariant functions on $M_{p,q}(\mathbb F)$ and thus depend only on the singular values of their argument. Again as a consequence of \cite{dJ06},  they are given by the Bessel functions 
 $$ \varphi_\lambda(X) = J_{B_{q}}(\kappa, i\lambda, \sigma_{sing}(X)), \> \lambda \in \mathbb C^q,$$
 where $ \kappa = (k^\prime, k)= \bigl(\frac{d}{2}(p-q+1) - \frac{1}{2}, \frac{d}{2}\bigr)$ and $\sigma_{sing}(X) = \sigma(\sqrt{X^*X}) \in \mathbb R^q $ denotes the set of singular values of $X\in M_{p,q}(\mathbb F),$ ordered by size. 
 We may therefore also consider the $\varphi_\lambda$ as functions on the closed Weyl chamber 
 \begin{equation}\label{chamber}\overline C_q = \{x = (x_1,\ldots, x_q) \in \mathbb R^q: x_1 \geq \ldots \geq x_q \geq 0\},\end{equation}
i.e.  $ \varphi_\lambda(x) = J_{B_q}(\kappa;i\lambda, x), \, x \in \overline C_q$. Moreover, $\varphi_\lambda = \varphi_\mu$ iff there exists some $w \in W=S_q \ltimes \mathbb Z_2^q$  with $\mu = w.\lambda.$ 

 The positive-definite spherical functions  are 
 the $\varphi_{\lambda}$ with $\mathbb R^q$, which again follows from \cite[Theorem 5.4]{Wo06}.

\section{The type $A$ case}\label{type A}

We start with some motivation from asymptotic spherical harmonic analysis, see \cite{Ol90, Fa08} for a general background.  Suppose that $(G_n, K_n), \,n\in \mathbb N$ is an increasing sequence of Gelfand pairs, where $G_n \subseteq G_{n+1}, \,K_n \subseteq K_{n+1}$ are closed subgroups satisfying $K_n = G_n \cap K_{n+1}.$ 
Then the pair $(G_\infty, K_\infty)$ with the  inductive limit groups $G_\infty := \lim_{n\rightarrow} G_n, \, K_\infty:= \lim_{n\rightarrow} K_n$ is called an Olshanski spherical pair. 
The spherical functions of  $(G_\infty, K_\infty)$ are defined as the  continuous, non-zero  and $K_\infty$-biinvariant functions $\varphi:  G_\infty \to \mathbb C$ satisfying 
	$$ \varphi (g) \varphi(h) = \lim_{n\to \infty} \int_{K_n} \varphi (gkh)d_nk \quad (g,h \in G_\infty),$$
 where $d_nk$ is the normalized Haar measure on $K_n$. We remark that this definition is according to \cite{Fa08}, whereas in \cite{Ol90}  spherical functions are in addition required to be positive definite. 
 Consider now the sequence of Gelfand pairs $(G_n, K_n)= (U_n(\mathbb F) \ltimes \text{Herm}_n(\mathbb F), U_n(\mathbb F))$ as above.  We regard $G_n$ and $K_n$ as closed subgroups of $G_{n+1}$ and $ K_{n+1}$ in the usual way. Then 
$\, (G_\infty, K_\infty)$ with the inductive limits 
\begin{equation}\label{Olshanski_A} K_\infty :=  \lim_{\rightarrow} K_n \,= 
U_\infty(\mathbb F), \quad G_\infty := \lim_{\rightarrow} G_n\, = U_\infty(\mathbb F)\ltimes \text{Herm}_\infty(\mathbb F)\end{equation}
 is an Olshanski spherical pair. 
The positive definite spherical functions of $(G_\infty, K_\infty)$ were  completely determined by Pickrell \cite[Sect.5]{Pi91}, see also \cite{OV96} for $\mathbb F = \mathbb C$,  and \cite[Section 3]{Fa08}. As functions on $\text{Herm}_\infty(\mathbb F),$ they are given by
$$ \varphi(X) = \prod_{j=1}^\infty e^{i\beta x_j - \frac{\gamma}{d} x_j^2} \prod_{l=1}^\infty \frac{e^{-i\alpha_lx_j}}{(1-i\frac{2}{d}\alpha_l x_j)^{d/2}}\,,$$
where $\beta, \gamma \in \mathbb R, \gamma \geq 0,  \, \alpha_l \in \mathbb R$ with $\sum_{l=1}^\infty \alpha_l^2 < \infty, $ and $(x_1, x_2, \ldots\,)\in \mathbb R^{(\infty)}$ are the eigenvalues of $X,$ ordered by size and counted according to their multiplicity.  The product is invariant under rearrangements of  the $\alpha_l$. For $\mathbb F=\mathbb C$ it is also noted in \cite{OV96} that 
the set of positive definite spherical functions is bijectively parametrized by the set 
$$\{(\alpha, \beta,\gamma): \beta \in \mathbb R, \gamma \geq 0, \alpha = \{\alpha_1, \alpha_2, \ldots \} \text{ a multiset with $\alpha_l \in \mathbb R$ and } \sum_l \alpha_l^2 < \infty\}.$$ 
In \cite{OV96}, explicit approximations of the positive definite spherical functions by positive definite spherical functions of the pairs $(G_n, K_n)$ with $n \to \infty$ by use  of spherical expansions were obtained in the case $\mathbb F = \mathbb C.$ In \cite{Bo07}, this was generalized  by  different methods to $\mathbb F = \mathbb R, \mathbb H.$  
 
 In the present paper, we shall extend the approach of \cite{OV96} to obtain limits for Bessel functions of type $A_{n-1}$ with an arbitrary multiplicity parameter $k>0.$

Let us first turn to the spectral parameters to be considered for $n \to \infty.$ Instead of working  with multisets, it will be convenient for us to work with sequences (or finite tuples) with a prescribed order of their components.  We introduce the following order  on $\mathbb R$:
$$ x \ll y \, \text{ iff either } |x|<|y| \text{ or } |x|=|y| \text{ and } x\leq y.$$
For instance, the sequence $(3,-3,2,1,-1,-1,0,0, \ldots)$ is decreasing w.r.t. $\ll.$

\begin{definition}\label{VK}
Consider  $\lambda(n)\in \mathbb R^n$ such that its entries are decreasing with respect to $ \ll$. 
	We regard  $(\lambda(n))_{n\in \mathbb N}$ as a sequence in $\mathbb R^{(\infty)} = \bigcup_{n=1}^\infty \mathbb R^n$ and call it a Vershik-Kerov sequence (VK sequence for short), if the following limits exist:
\begin{align*} \alpha_i &:= \lim_{n\to \infty} \frac{\lambda(n)_i}{n} \quad (i \in \mathbb N),\\
\beta &:= \lim_{n\to \infty} \frac{p_1(\lambda(n))}{n}\,,\\
\delta &:= \lim_{n\to \infty} \frac{p_2(\lambda(n))}{n^2},	
\end{align*}
where 
$$ p_m(x) = \sum_{i=1}^\infty x_i^m \quad \text{for } m \in \mathbb N, \,\,  p_0  \equiv 1 $$
are the power sum symmetric functions on $\mathbb R^{(\infty)}.$ They generate the algebra of
symmetric functions on $\mathbb R^{(\infty)}$, i.e. the symmetric polynomial functions in arbitrary many variables. 
	\end{definition}

	\begin{remark} 
	Our definition of a VK sequence is equivalent to the Olshanski-Vershik conditions of \cite[Def 2.2]{AN21}, which are slightly weaker than the conditions of \cite{OO98}. 
		\end{remark}

	\begin{lemma}\label{VK_Lemma} \begin{enumerate}\itemsep=1pt
	\item[{(i)}] If $(\lambda(n))_{n\in \mathbb N} $ is a Vershik-Kerov sequence with associated parameters $(\alpha_i), \beta, \delta$ as above, then $$\, \, \gamma:= \delta - \sum_{i=1}^\infty \alpha_i^2 \,\geq 0.$$  
	In particular, the sequence $\alpha = (\alpha_i)_{i\in \mathbb N}$ is square-summable. 
		\item[{(ii)}] If in addition $\lambda(n)_i \geq 0$ for all $i, n \in \mathbb N,$ then $\,\gamma=0.$
	\end{enumerate}
	\end{lemma}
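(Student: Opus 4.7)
My plan is to prove both parts by a simple truncation of $p_2(\lambda(n))/n^2$, exploiting the monotone $\ll$-ordering of $\lambda(n)$.

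For part (i), I would fix $N \in \mathbb N$ and use that, since all summands in $p_2(\lambda(n)) = \sum_{i=1}^n \lambda(n)_i^2$ are nonnegative,
\[
\sum_{i=1}^N \Bigl(\frac{\lambda(n)_i}{n}\Bigr)^{\!2} \;\leq\; \frac{p_2(\lambda(n))}{n^2} \qquad (n\geq N).
\]
Letting $n\to\infty$ (the left-hand side is a finite sum of convergent sequences) yields $\sum_{i=1}^N \alpha_i^2 \leq \delta$; since $N$ is arbitrary, $(\alpha_i)_{i\in\mathbb N}$ is square-summable and $\gamma \geq 0$.

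For part (ii), the key observation is that under the sign condition $\lambda(n)_i \geq 0$, the order $\ll$ coincides with the usual decreasing order on $[0,\infty)$, so $0 \leq \lambda(n)_i \leq \lambda(n)_N$ for all $N \leq i \leq n$. This lets me bound the tail by
\[
\sum_{i=N+1}^n \lambda(n)_i^2 \;\leq\; \lambda(n)_N \sum_{i=N+1}^n \lambda(n)_i \;\leq\; \lambda(n)_N \cdot p_1(\lambda(n)).
\]
Dividing by $n^2$ and factoring the right-hand side as $(\lambda(n)_N / n)\cdot (p_1(\lambda(n))/n)$, it converges to $\alpha_N \beta$ as $n \to \infty$, while the left-hand side tends to $\delta - \sum_{i=1}^N \alpha_i^2$ by the same truncation used in (i). Hence
\[
\delta - \sum_{i=1}^N \alpha_i^2 \;\leq\; \alpha_N \beta \qquad \text{for all } N \in \mathbb N.
\]
Finally I let $N \to \infty$: by (i), $(\alpha_N)$ is square-summable and hence $\alpha_N \to 0$, so $\gamma \leq 0$, and together with (i) this forces $\gamma = 0$.

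The argument is essentially mechanical; the only point requiring some care is the order of limits in (ii), where one first passes $n \to \infty$ with $N$ fixed (using that both the $N$-truncated and the full $n^{-2}$-normalized sums of $\lambda(n)_i^2$ converge) and only afterwards $N \to \infty$. I do not anticipate any substantive obstacle.
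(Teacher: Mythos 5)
Your proof is correct and follows essentially the same route as the paper: part (i) via the truncation $\sum_{i=1}^N(\lambda(n)_i/n)^2\le p_2(\lambda(n))/n^2$ with $n\to\infty$ then $N\to\infty$, and part (ii) via the tail bound $\sum_{i>N}\lambda(n)_i^2\le \lambda(n)_N\,p_1(\lambda(n))$ using the decreasing order of the nonnegative entries, followed by $\alpha_N\to 0$. The order of limits is handled correctly, so there is nothing to add.
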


	\begin{definition} Suppose that $(\lambda(n))_{n\in \mathbb N} $ is a VK sequence. Then the triple $\omega= (\alpha, \beta, \gamma) $ with $ \alpha = (\alpha_i)_{i\in \mathbb N}$ are called the VK parameters of the sequence $(\lambda(n))_{n\in \mathbb N}.$ 
	Note that the entries of $\alpha$ are also ordered w.r.t. $\ll.$
\end{definition}

\begin{proof}[Proof of Lemma \ref{VK_Lemma}] For fixed $N \in \mathbb N$ and all $n \geq N$ one has
$$ \sum_{i=1}^N \alpha_i^2 \, \leq \, 
\sum_{i=1}^N \left(\alpha_i^2 - \frac{\lambda(n)_i^2}{n^2}\right) \, + \,  \sum_{i=1}^n \frac{\lambda(n)_i^2}{n^2}\,.$$
By definition of  $(\alpha_i)$ and $\delta,$ the right-hand side tends to $\delta$ as $n \to \infty.$ This proves part (i). 	

(ii) By the ordering of the entries of $\lambda(n),$ we obtain for $N \in \mathbb N$ and $n \geq N $ that
 \begin{align*}
	\frac{p_2(\lambda(n))}{n^2} \, = &
\, \sum_{i=1}^{N-1} \left(\frac{\lambda(n)_i}{n}\right)^2 \, +\, \sum_{i=N}^{n} \left(\frac{\lambda(n)_i}{n}\right)^2 \\
\leq  & \,\sum_{i=1}^{N-1}\left(\frac{\lambda(n)_i}{n}\right)^2 \,
+\,\frac{\lambda(n)_N}{n} \sum_{i=1}^{n} \frac{\lambda(n)_i}{n}\,.
\end{align*}   
Taking the limit $n\to\infty$ on both sides, we obtain that
$$ \delta\, \leq  \, \sum_{i=1}^{N-1} \alpha_i^2 \,+\,
\alpha_N\beta.$$
As $\lim_{N\to\infty}\alpha_N =0$, this implies that
$\, \delta\leq \sum_{i=1}^{\infty} \alpha_i^2\,$ and therefore $\gamma =0.$  
\end{proof}

We shall throughout fix a strictly positive multiplicity $k>0$ on $A_{n-1}$ and suppress it in our notation. 

For sequences $(\lambda(n))_{n\in \mathbb N}$ of spectral parameters $\lambda(n)\in \mathbb R^n$ with growing dimension $n$, 
  we are interested in the convergence behaviour of the Bessel functions $J_{A_{n-1}}(i\lambda(n),\,.\,)$ as  $n\to \infty.$ 
  For this, we consider $J_{A_{n-1}} (\lambda, \,.\,)\,$ as a function on $\mathbb C^r$ for all $r\leq n$ by 
  \begin{equation}\label{short} J_{A_{N-1}}(\lambda, z) := J_{A_{n-1}}\bigl(\lambda, (z,\underline 0_{n-r})\bigr), \quad z\in \mathbb C^r\,.\end{equation}

For later use, we record the following representation. 
  
  \begin{proposition}\label{Bessel_A_rep}
  For $\lambda\in \mathbb C^n$ and $z\in \mathbb C^r$ with $r\leq n$, 
  $$ J_{A_{n-1}}(\lambda	,z)    = \sum_{\kappa \in \Lambda_r^+}  \frac{C_\kappa(\lambda) [kr]_\kappa}{[kn]_\kappa |\kappa|!}\, \mathcal P_\kappa(z),$$
  with the  renormalized Jack polynomials
  $$ \mathcal P_\kappa (z) = \frac{C_\kappa(z)}{C_\kappa(\underline 1_r)} $$
  and the generalized Pochhammer symbol
  $$ [\mu]_\kappa = \prod_{j=1}^{l(\kappa)} (\mu-k(j-1))_{\kappa_j}\quad (\mu\in \mathbb C).$$
  
  \end{proposition}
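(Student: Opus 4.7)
The plan is to start from the Jack hypergeometric representation \eqref{Bessel_A},
$$ J_{A_{n-1}}(\lambda,w) \,=\, \sum_{\kappa\in \Lambda_n^+} \frac{C_\kappa(\lambda)\, C_\kappa(w)}{|\kappa|!\, C_\kappa(\underline 1_n)},$$
and to evaluate it at $w = (z, \underline 0_{n-r})$ with $z\in \mathbb C^r$ according to the convention \eqref{short}. The stability property \eqref{stability} of the Jack polynomials implies at once that $C_\kappa^{(n)}(z,\underline 0_{n-r})$ vanishes whenever $l(\kappa) > r$, and for $l(\kappa)\leq r$ it coincides with $C_\kappa^{(r)}(z)$. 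Hence the sum collapses to $\kappa\in \Lambda_r^+$, and writing $C_\kappa^{(r)}(z) = C_\kappa(\underline 1_r)\,\mathcal P_\kappa(z)$ yields
$$ J_{A_{n-1}}(\lambda, z) \,=\, \sum_{\kappa\in \Lambda_r^+} \frac{C_\kappa(\lambda)\,C_\kappa(\underline 1_r)}{|\kappa|!\, C_\kappa(\underline 1_n)}\, \mathcal P_\kappa(z). $$

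The only remaining point is to identify $C_\kappa(\underline 1_r)/C_\kappa(\underline 1_n)$ with $[kr]_\kappa/[kn]_\kappa$. For this I would invoke Macdonald's product formula for the principal specialisation of the Jack polynomials: in the $P$-normalisation, $P_\kappa^{(\alpha)}(\underline 1_n)$ is a product over the boxes of $\kappa$ in which the only $n$-dependent factor is $\prod_{(i,j)\in \kappa}\bigl(n - (i-1) + \alpha(j-1)\bigr)$, all other factors depending solely on $\kappa$ and $\alpha$. Setting $\alpha = 1/k$ and extracting a factor $1/k$ from each box rewrites this $n$-dependent part as
$$ k^{-|\kappa|}\prod_{(i,j)\in \kappa}\bigl(kn - k(i-1) + (j-1)\bigr) \,=\, k^{-|\kappa|}\,[kn]_\kappa. $$
Since the $C$- and $P$-normalisations of the Jack polynomials differ by a factor depending only on $\kappa$ and $\alpha$, this factor cancels when forming the quotient, and one gets
$$ \frac{C_\kappa(\underline 1_r)}{C_\kappa(\underline 1_n)} \,=\, \frac{[kr]_\kappa}{[kn]_\kappa}, $$
from which the claimed identity follows immediately.

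The main, and essentially only, obstacle is this last step, namely isolating the $n$-dependence in the principal specialisation $C_\kappa(\underline 1_n)$ and recognising it as the generalised Pochhammer symbol $[kn]_\kappa$ up to an $n$-independent constant. Everything else is a routine application of the stability property \eqref{stability} to the defining hypergeometric series \eqref{Bessel_A}.
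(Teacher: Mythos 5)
Your proposal is correct and follows essentially the same route as the paper: apply the stability property \eqref{stability} to collapse the sum in \eqref{Bessel_A} to $\Lambda_r^+$, then replace $C_\kappa(\underline 1_r)/C_\kappa(\underline 1_n)$ by $[kr]_\kappa/[kn]_\kappa$. The only difference is that the paper simply cites Kaneko's formula (17) for this specialization ratio, whereas you rederive it from the product formula for $P_\kappa^{(\alpha)}(\underline 1_n)$; your derivation of the $n$-dependent factor as $k^{-|\kappa|}[kn]_\kappa$ is accurate.
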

  
  \begin{proof}
  Consider formula \eqref{Bessel_A}. From \cite[formula (17)]{Ka93}	it is known that for all $\kappa \in \Lambda_r^+$, 
  $$\frac{C_\kappa(\underline 1_r)}{C_{\kappa}(\underline 1_n)} \,=\, \frac{[kr]_\kappa}{[kn]_\kappa}\,.$$
  Together with  the stability property \eqref{stability}, the assertion follows. 
  \end{proof}

  We shall prove the following theorem:
  
\begin{theorem}\label{main} Let $(\lambda(n))_{n\in \mathbb N}$ be a sequence of spectral parameters $\lambda(n) \in \mathbb R^n$ such that each $\lambda(n)$ is decreasing with respect to $\ll.$ Then for fixed multiplicity $k>0,$ the following statements are equivalent. 
\begin{enumerate} \itemsep=2pt

\item[{\rm(1)}] $(\lambda(n))_{n\in \mathbb N}$ is a Vershik-Kerov sequence. 
\item[{\rm(2)}] The sequence of Bessel functions $\bigl(J_{A_{n-1}}(i\lambda(n), \,.\,)\bigr)_{n\in \mathbb N}$ converges uniformly on compact subsets of $\mathbb R^{(\infty)}$, i.e. the convergence is locally uniform on each of the spaces $\mathbb R^r$, $r\in \mathbb N.$ 
\item[{\rm(3)}] The sequence of Bessel functions $\bigl(J_{A_{n-1}}(i\lambda(n), \,.\,)\bigr)_{n\in \mathbb N}$ converges pointwise on $\mathbb R$ to a function which is continuous at $0.$ 
\item[{\rm(4)}] For each fixed multi-index of length $r$, the corresponding coefficients in the Taylor of expansion of $J_{A_{n-1}}(i\lambda(n), \,.\,)\,$ around $0\in \mathbb R^r$ converge as $n \to \infty.$ 
\item[{\rm(5)}] For all symmetric functions $f: \mathbb R^{(\infty)} \to \mathbb C,$ the limit $$ \lim_{n\to\infty} \frac{f(\lambda(n))}{n^{\text{deg} f}}$$ exists. 
\end{enumerate}
Moreover, suppose in this case that $(\alpha, \beta, \gamma)$ are the VK parameters of the VK sequence $(\lambda(n))_{n\in \mathbb N}.\,$
Then 
\begin{equation} \label{limit_A}\lim_{n\to \infty} J_{A_{n-1}}(i\lambda(n),z) \,=\, \prod_{j=1}^\infty e^{i\beta z_j - \frac{\gamma}{2k}z_j^2} \prod_{l=1}^\infty \frac{e^{-i\alpha_l z_j}}{\bigl(1- \frac{i\alpha_l z_j}{k}\bigr)^k}\,,
\end{equation}
where for each $r\in \mathbb N$, the convergence is locally uniform
 in the domain 
\begin{equation}\label{S_r,k} S_{r,k}^\alpha:= \Big\{ z\in \mathbb{C}^{r} : \|\text{Im}\,z\|_\infty <\frac{k}{r|\alpha_1|} \Big\},\end{equation}
 with the notation 
 $\|z\|_\infty = max_{1\leq j\leq n} |z_j|$ for $z\in \mathbb C^r.$
\end{theorem}

\begin{remarks}\label{remBouali} \begin{itemize}\item[{(1)}]	
In the geometric case $k=1$ (i.e. for Hermitian matrices over $\mathbb C$) and for real arguments $z$,
this result essentially goes back to \cite{OV96},  while in \cite{Bo07}, where also $\mathbb F= \mathbb R$ and $\mathbb H$ are considered, only the limit \eqref{limit_A} is established, by completely different methods and under the additional condition $\gamma=0.$ 
\item[{(2)}] We shall prove Theorem \ref{main} by methods which are  inspired by those of \cite{OV96, OO98} and \cite[Chapter 3]{Fa08}.
 They are based on the expansion of the Dunkl-type Bessel functions in terms of Jack  polynomials. 
 The equivalence of parts (1), (2) and (3) of Theorem \ref{main} as well as the convergence statement on $S_{r,k}^\alpha$ were
  already proven 
 in \cite{AN21}  by a different, probabilistic approach. We feel that our method is very natural, which is also suggested by \cite[Remark 1.15]{AN21}.
 \end{itemize}
 \end{remarks}

The proof of Theorem \ref{main} will be carried out  in several steps. We start with the following observation, which is already noted in \cite[Prop. 2.3]{AN21}. For the sake of completeness, we include a short proof which slightly differs from that of \cite{AN21}.

\begin{theorem} \label{VK_Theorem}
	Assume that $(\lambda(n))_{n\in \mathbb N}$ is a VK sequence with  parameters $\omega = (\alpha, \beta,\gamma).$ Then 
	$$ \lim_{n\to \infty} \frac{p_m(\lambda(n))}{n^m} \, = \widetilde p_m(\omega) := \begin{cases}
  \, 1, & m = 0, \\
  \, \beta, & m=1,\\
  \, \delta = \gamma + \sum_{i=1}^\infty \alpha_i^2\,, & m =2, \\
  \, \sum_{i=1}^\infty \alpha_i^m \,, & m \geq 3,
  	\end{cases}$$
  	where the series in the last case is absolutely convergent. In particular, for each symmetric  function $f$ on $\mathbb R^{(\infty)}, $ the limit 
  	$$ \widetilde f(\omega) := \lim_{n\to\infty}\frac{f(\lambda(n))}{n^{\text{deg} f}}$$
  	exists. 
\end{theorem}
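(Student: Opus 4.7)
The cases $m=0,1,2$ are essentially the content of Definition \ref{VK} and Lemma \ref{VK_Lemma}: for $m=0$ the statement is trivial, for $m=1$ it is the definition of $\beta$, and for $m=2$ it is the definition of $\delta$ combined with part (i) of Lemma \ref{VK_Lemma}. The substantive task is therefore the case $m\geq 3$, together with the final assertion on general symmetric $f$.

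For $m\geq 3$, the plan is a standard tail estimate that exploits the ordering of $\lambda(n)$ with respect to $\ll$, which guarantees that the sequence $|\lambda(n)_i|$ is non-increasing in $i$. Fix $N\in\mathbb N$ and split
\begin{equation*}
\frac{p_m(\lambda(n))}{n^m}\,=\,\sum_{i=1}^{N}\Bigl(\frac{\lambda(n)_i}{n}\Bigr)^{\!m}\,+\,\sum_{i=N+1}^{n}\Bigl(\frac{\lambda(n)_i}{n}\Bigr)^{\!m}.
\end{equation*}
The first sum converges as $n\to\infty$ to $\sum_{i=1}^N\alpha_i^m$ by the defining property of a VK sequence. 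For the second sum I would estimate, using $|\lambda(n)_i|\leq|\lambda(n)_{N+1}|$ for $i\geq N+1$ and extracting two of the $m$ factors,
\begin{equation*}
\Bigl|\sum_{i=N+1}^{n}\Bigl(\frac{\lambda(n)_i}{n}\Bigr)^{\!m}\Bigr|\,\leq\,\Bigl(\frac{|\lambda(n)_{N+1}|}{n}\Bigr)^{\!m-2}\cdot\frac{p_2(\lambda(n))}{n^2}.
\end{equation*}
Passing to $n\to\infty$ this is bounded by $|\alpha_{N+1}|^{m-2}\cdot\delta$, which tends to $0$ as $N\to\infty$ since $\alpha$ is square-summable and hence $\alpha_i\to 0$. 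A standard double-limit argument then gives $p_m(\lambda(n))/n^m\to\sum_{i=1}^\infty\alpha_i^m$. The absolute convergence of this last series is also a consequence of square-summability: for $i$ large enough that $|\alpha_i|\leq 1$, one has $|\alpha_i|^m\leq \alpha_i^2$.

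For the final assertion, I would recall that every symmetric function $f$ on $\mathbb R^{(\infty)}$ is a polynomial in the power sums $p_1,p_2,\ldots$ Decompose $f$ into its homogeneous components with respect to the weight grading $\deg p_m=m$ and write each homogeneous piece as $\sum_I c_I\,p_{i_1}\cdots p_{i_k}$ with $i_1+\cdots+i_k$ equal to its degree. For the top-degree piece of weight $d=\deg f$, dividing by $n^d$ distributes the powers of $n$ across the factors as $\prod_j p_{i_j}(\lambda(n))/n^{i_j}$, and each factor converges by what we have already shown. Lower-weight homogeneous pieces come with strictly fewer powers of $n$ in the denominator after dividing by $n^d$, hence vanish in the limit. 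This gives the existence of $\widetilde f(\omega)$ as a polynomial in the $\widetilde p_m(\omega)$.

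The only step requiring genuine care is the uniform tail estimate for $m\geq 3$; the monotonicity $|\lambda(n)_i|\!\downarrow$ forced by the order $\ll$ is the key input that makes the estimate above possible, and it is what turns the bare square-summability of $\alpha$ into control of all higher power sums.
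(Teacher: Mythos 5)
Your proposal is correct and follows essentially the same route as the paper: the cases $m\le 2$ are dispatched by the definitions and Lemma \ref{VK_Lemma}, and for $m\ge 3$ the paper uses exactly your tail estimate $\sum_{i\ge N}|\lambda(n)_i/n|^m\le |\lambda(n)_N/n|^{m-2}\,p_2(\lambda(n))/n^2$ (its inequality \eqref{sum_ab}) together with square-summability of $\alpha$ and an $\epsilon$--$N$ double-limit argument. Your explicit reduction of the final assertion to products of power sums, with lower-weight homogeneous pieces vanishing after division by $n^{\deg f}$, is a correct filling-in of a step the paper leaves implicit.
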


\begin{proof} We only have to consider the case $m \geq 3.$ In view of the ordering of $\lambda(n)$ we have for arbitrary $N \in \mathbb N$ that 
\begin{equation}\label{sum_ab} \sum_{i=N}^\infty  \Big\vert \frac{\lambda(n)_i}{n}\Big\vert^m \,\leq \, \Big\vert \frac{\lambda(n)_N}{n}\Big\vert^{m-2} \cdot \frac{p_2(\lambda(n))}{n^2}.\end{equation}
The expression on the right side converges to $\,\alpha_N^{m-2}\delta$ as $n\to \infty.$ 
As $\alpha$ is square-summable by Lemma \ref{VK_Lemma}, this implies that for each $\epsilon >0,$ there exists an index $N\in \mathbb N$ such that for all $n \in \mathbb N,$
\begin{equation}\label{9.3} \sum_{i=N}^\infty |\alpha_i|^m \,+ \,\sum_{i=N}^\infty \Big\vert \frac{\lambda(n)_i}{n}\Big\vert^m \,< \epsilon.\end{equation}
Estimate \eqref{9.3} further leads to 
\begin{align*}
\Big\vert \frac{p_m(\lambda(n))}{n^m} - p_m(\alpha)\Big\vert \leq &\, \sum_{i=N}^\infty |\alpha_i|^m \, +\,\sum_{i=N}^\infty \Big\vert \frac{\lambda(n)_i}{n}\Big\vert^m\, + \sum_{i=1}^{N-1} \Big\vert \frac{\lambda(n)_i^m}{n^m} - \alpha_i^m\Big\vert \\
\leq & \, \epsilon +  \sum_{i=1}^{N-1} \Big\vert \frac{\lambda(n)_i^m}{n^m} - \alpha_i^m\Big\vert.\end{align*}
By the definition of a VK sequence, the last sum tends to zero as $n \to 
\infty.$ As $\epsilon>0$ was arbitrary, 	this finishes the proof.
\end{proof}

 We next consider for  $\lambda \in \mathbb C^{(\infty)} $
 the complex function
 $$ \Phi(\lambda;z):= \prod_{j=1}^\infty \frac{1}{(1-\lambda_jz)^k}\,,$$
 where $\zeta \mapsto \zeta^k$ denotes the principal holomorphic branch of the power function on 
$\mathbb C \setminus ]-\infty,0].$ For fixed $\lambda$, 
 the product is finite and $\Phi(\lambda;\,.\,)$ is holomorphic in a neighborhood  of $0$ in $\mathbb C.$
 According to formula (2.9) of \cite{OO98}, 
 $$ \Phi(\lambda;z) = \, \sum_{j=0}^\infty g_j(\lambda)z^j $$
 with 
 \begin{equation}\label{g_j} g_j(\lambda) = \sum_{i_1 \leq \ldots\leq i_j}\!\! \frac{(k)_{m_1}(k)_{m_2}\cdots}{m_1!\, m_2!\cdots } \, \cdot \lambda_{i_1}\cdots \lambda_{i_j},\end{equation}
 where  $\, m_l := \# \{r\in \mathbb N:i_r = l\} $ denotes the multiplicity of the number $l$ in the tuple $(i_1, \ldots, i_j)$  and $(k)_m = k(k+1) \cdots (k+m-1)$ is the Pochhammer symbol.

 Moreover, from \cite[formula (2.8)]{OO98} and the connection between the $C$- and $P$-normalizations of the Jack polynomials according to formula (12.135) of \cite{Fo10},  
 one calculates that
 \begin{equation}\label{g_C} g_j(\lambda) = \frac{(k)_j}{j!} \cdot   C_{(j)}(\lambda).\end{equation}
 (For partitions $\kappa = (j)$ with just one part, the Jack polynomials $C_{(j)}$ and $P_{(j)}$ coincide). 
 
  \begin{lemma}\label{unique} Suppose $\omega = (\alpha, \beta, \gamma)$ are the VK parameters of a Vershik-Kerov sequence. Then the following hold. 
  \begin{enumerate}
  \item[\rm{(1)}] The infinite product
 $$ \Psi(\omega;z)	:= e^{k\beta z+ \frac{k\gamma}{2}z^2}\prod_{l=1}^\infty \frac{e^{-k\alpha_l z}}{(1-\alpha_l z)^k}  $$
  is holomorphic in $\,S:= \mathbb C \setminus \bigl( \,\big ]-\infty, -\frac{1}{|\alpha_1|}\big]\cup \big[\frac{1}{|\alpha_1|}, \infty\big[ \,\bigr)\,.$
  If $\alpha_l\geq 0$ for all $l\in \mathbb N,$ then
  $\Psi(\omega;\,.\,)  $ is holomorphic in $\, \widetilde S:= \mathbb C \setminus  \big[\frac{1}{\alpha_1}, \infty\big[ \,.$  
  \item[\rm{(2)}] $\omega$ is uniquely determined by $\Psi(\omega;\,. \,).$  
  \end{enumerate}
  
  \end{lemma}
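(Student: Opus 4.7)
For part (1), the plan is to exploit the identity
$$\frac{e^{-k\alpha_l z}}{(1-\alpha_l z)^k} \;=\; \exp\!\Big(k\sum_{n=2}^\infty \frac{(\alpha_l z)^n}{n}\Big)\qquad (|\alpha_l z| < 1),$$
together with the square-summability of $\alpha$ provided by Lemma \ref{VK_Lemma}. Given a compact set $K\subset S$, put $M:=\max_{z\in K}|z|$; since the closest branch points are at $\pm 1/|\alpha_1|\notin S$, a compactness argument yields $M|\alpha_1|<1$. Choose $L$ so large that $M|\alpha_l|\le 1/2$ for all $l\ge L$; then each exponent with $l\ge L$ is bounded by a constant multiple of $|\alpha_l|^2$, so summing over $l\ge L$ gives absolute and uniform convergence on $K$ by the square-summability, and the corresponding tail of the infinite product is holomorphic and nonvanishing on $K$. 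The finitely many remaining factors are individually holomorphic on $S$ because the branch cut of $(1-\alpha_l z)^{-k}$, namely $\{z:\alpha_l z\in[1,\infty)\}$, is contained in $]-\infty,-1/|\alpha_1|]\cup[1/|\alpha_1|,\infty[$. The case of $\widetilde S$ when all $\alpha_l\ge 0$ is handled identically, with all branch cuts in $[1/\alpha_1,\infty)$.

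For part (2), I would analyse $L(z):=\log\Psi(\omega;z)$ near the origin. The expansion above, combined with absolute convergence of the double series (justified as in part (1)), legitimises interchange of summation and gives
$$L(z) \;=\; k\beta z + \frac{k\gamma}{2}z^2 + k\sum_{l=1}^\infty\sum_{n=2}^\infty \frac{(\alpha_l z)^n}{n}\;=\; k\beta z + \frac{k\delta}{2}z^2 + k\sum_{n=3}^\infty \frac{p_n(\alpha)}{n}\,z^n,$$
with $\delta := \gamma+p_2(\alpha)$ and $p_n(\alpha)=\sum_l\alpha_l^n$. Reading off the Taylor coefficients of $L$ at $0$ therefore recovers $\beta$, the quantity $\delta$, and every power sum $p_n(\alpha)$ for $n\ge 3$ directly from $\Psi(\omega;\cdot)$.

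The main obstacle, and the heart of the uniqueness argument, is reconstructing the sequence $\alpha$ from $\{p_n(\alpha):n\ge 3\}$. For this I would introduce
$$G(z)\;:=\;\sum_{n\ge 3} p_n(\alpha)\, z^{n-3}\;=\;\sum_{l=1}^\infty \frac{\alpha_l^{\,3}}{1-\alpha_l z},$$
the identity being valid on $\{|z|<1/|\alpha_1|\}$ by a Fubini argument analogous to part (1) (using $\sum_l|\alpha_l|^3\le|\alpha_1|\sum_l\alpha_l^2<\infty$). By the same tail-and-finite-sum decomposition, $G$ extends as a meromorphic function on all of $\mathbb C$, whose only singularities are simple poles at the distinct nonzero values $a$ taken by the sequence $\alpha$, with residue $-m(a)\,a^2$ at $z=1/a$, where $m(a)$ denotes the multiplicity of $a$ in $\alpha$. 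Hence the multiset of nonzero entries of $\alpha$, and therefore (since $\ll$ is a total order) the $\ll$-ordered sequence $\alpha$ itself, is uniquely determined. Finally $\gamma = \delta - p_2(\alpha)$ is determined as well, completing the proof of (2).
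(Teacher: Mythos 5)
Your part (1) is essentially the paper's argument: the same quadratic estimate $\bigl|1-e^{-k\alpha_l z}(1-\alpha_l z)^{-k}\bigr|\le C|\alpha_l z|^2$ on the tail, square-summability of $\alpha$ from Lemma \ref{VK_Lemma}, and the observation that the finitely many remaining factors have their branch cuts inside the excluded rays. One slip: your claim that compactness of $K\subset S$ forces $M|\alpha_1|<1$ is false ($S$ contains points of arbitrarily large modulus off the real axis), but you never actually use it --- your choice of $L$ with $M|\alpha_l|\le\tfrac12$ for $l\ge L$ only needs $\alpha_l\to 0$, so the argument stands. Part (2) is where you genuinely diverge from the paper. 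The paper argues singularity by singularity: unless $\Psi(\omega;\cdot)$ is entire, $|\Psi(\omega;z)|\to\infty$ as $z\to 1/\alpha_1$, which pins down $\alpha_1$; one then multiplies by $(1-\alpha_1z)^k$ and iterates, after which $\beta$ and $\gamma$ are read off. You instead take $\log\Psi$ near $0$, recover $\beta$, $\delta=\gamma+p_2(\alpha)$ and all power sums $p_n(\alpha)$, $n\ge 3$, from the Taylor coefficients, and then reconstruct the multiset of nonzero entries of $\alpha$ from the poles and residues of the resolvent-type function $G(z)=\sum_l \alpha_l^3/(1-\alpha_l z)$; your residue computation ($-m(a)a^2$ at $z=1/a$) and the meromorphic continuation via the tail-plus-finite-sum decomposition are correct, and the $\ll$-ordering then recovers the sequence itself. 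Your route is somewhat longer but arguably more robust: it sidesteps the paper's (slightly delicate, unargued) claim that the remaining factors do not cancel the blow-up at $1/\alpha_1$, and it ties the uniqueness directly to the limit functionals $\widetilde p_m(\omega)$ of Theorem \ref{VK_Theorem}. The paper's peeling argument buys brevity. Both are valid proofs.
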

  
 \begin{proof}
(1) Power series expansion around $z=0$ shows that for $|\alpha_lz|\leq \delta <1,$ 
$$ \Big\vert 1 - \frac{e^{-k\alpha_lz}}{(1-\alpha_lz)^k}\Big\vert \leq C_\delta |\alpha_lz|^2 $$
with some constant $C_\delta>0.$ Recall that $\alpha$ is decreasing w.r.t. $\ll$ and square-summable. Hence for fixed $n \in \mathbb N,$ the product 
$$ \prod_{l=n}^\infty  \frac{e^{-k\alpha_lz}}{(1-\alpha_lz)^k}$$
defines a holomorphic function in the open disc 
$\,\bigl\{z\in \mathbb C: |z| < 1/|\alpha_n|\in [0, \infty]\bigr\}.$ 
Moreover, 
$$ \prod_{l=1}^{n-1}  \frac{e^{-k\alpha_lz}}{(1-\alpha_lz)^k}$$
is holomorphic in $S$ and even in $\widetilde S$ if $\alpha_l \geq 0$ for all $l.$ As $\lim_{l\to \infty} \alpha_l = 0,$ it follows that  
$\psi(\omega; \,.\,)$ is holomorphic in $S$ or even in $\widetilde S.$
Unless $\alpha$ is identical zero (which is equivalent to $\alpha_1 =0$), $\Psi(\omega;\,.\,)$ has a singularity in $z= \frac{1}{\alpha_1}.$

(2)
 If $\psi(\omega;\,.)$ is entire, then $\alpha_1 =0.$ Otherwise   $\lim_{z \to  1/\alpha_1} |\Psi(\omega;z)| =\infty. $  Thus $\alpha_1$ is uniquely determined by $\Psi(\omega;\,.\,).$ Multiplying  successively by  $(1-\alpha_1 z)^k,  \ldots ,$ we further obtain that $\alpha_2, \alpha_3, \ldots$ are uniquely determined by $\Psi(\omega; \,.\,)$ as well. It is then obvious that also $\beta$ and $\gamma$ are uniquely determined by $\Psi(\omega;\,.\,).$  
\
 \end{proof}

\begin{proposition}\label{9.6} (1) For $\lambda \in \mathbb C^{(\infty)}$ 
and $z \in \mathbb C$ with $|z| < 1/\max_{j\in \mathbb N} |\lambda_j|$,
\begin{equation}\label{9.5} \Phi(\lambda;z) = \exp\Bigl(k\sum_{m=1}^\infty p_m(\lambda) \frac{z^m}{m}\Bigr).\end{equation}
(2) Moreover, if $(\lambda(n))_{n\in \mathbb N}$ is a VK sequence with parameters $\omega = (\alpha, \beta, \gamma),$ then 
$$ \lim_{n\to\infty} \Phi\Bigl(\frac{\lambda(n)}{n}; z\Bigr) =  \Psi(\omega;z),$$
where the convergence is locally uniform in $\,\{z \in \mathbb C : |z|<1/|\alpha_1|\}$.  
	\end{proposition}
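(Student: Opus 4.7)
My plan is to take logarithms and reduce both parts to the convergence of the normalized power sums $p_m(\lambda(n))/n^m$ supplied by Theorem \ref{VK_Theorem}, with the uniform tail estimate \eqref{sum_ab} as the key tool for interchanging $\lim_{n\to\infty}$ with the infinite $m$-sum.

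For part (1), since $\lambda\in\mathbb{C}^{(\infty)}$ has only finitely many nonzero entries, the product defining $\Phi(\lambda;z)$ is finite, nonzero and holomorphic near $z=0$. Taking the principal logarithm and expanding $-\log(1-w)=\sum_{m\geq 1}w^m/m$, I exchange the (finite) $j$-sum with the $m$-series to obtain
$$\log\Phi(\lambda;z) \,=\, -k\sum_{j}\log(1-\lambda_jz) \,=\, k\sum_{m=1}^\infty\frac{z^m}{m}\,p_m(\lambda),$$
which after exponentiation is \eqref{9.5} (the lower summation index in \eqref{9.5} should evidently read $m=1$). The same manipulation applied to each factor of $\Psi(\omega;z)$, using $-k\alpha_lz-k\log(1-\alpha_lz) = k\sum_{m\geq 2}(\alpha_lz)^m/m$ and summing over $l$ (legitimate for $|z|<1/|\alpha_1|$ by the square-summability of $\alpha$ from Lemma \ref{VK_Lemma}), combines with the prefactor $k\beta z+(k\gamma/2)z^2$ via Theorem \ref{VK_Theorem} (specifically $\sum_l\alpha_l^2=\widetilde p_2(\omega)-\gamma$ and $\sum_l\alpha_l^m=\widetilde p_m(\omega)$ for $m\geq 3$, together with $\widetilde p_1(\omega)=\beta$) to yield the clean identity
$$\log\Psi(\omega;z) \,=\, k\sum_{m=1}^\infty\frac{z^m}{m}\,\widetilde p_m(\omega).$$

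For part (2), applying part (1) to $\lambda(n)/n$ gives a series whose coefficients $p_m(\lambda(n))/n^m$ converge termwise to $\widetilde p_m(\omega)$ by Theorem \ref{VK_Theorem}. To pass the limit through the infinite $m$-sum I need a uniform majorant. Invoking \eqref{sum_ab} with $N=1$,
$$\Big|\frac{p_m(\lambda(n))}{n^m}\Big| \,\leq\, \Big|\frac{\lambda(n)_1}{n}\Big|^{m-2}\cdot\frac{p_2(\lambda(n))}{n^2} \,\leq\, A^{m-2}B \qquad (m\geq 2),$$
with constants $A,B$ independent of $n$, since $|\lambda(n)_1|/n\to|\alpha_1|$ and $p_2(\lambda(n))/n^2\to\delta$ are in particular bounded sequences. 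Hence on any closed disc $|z|\leq r$ with $r<1/A$ the series is dominated uniformly in $n$ by a convergent geometric-type series, so dominated convergence yields uniform convergence of $\log\Phi(\lambda(n)/n;z)$ to $\log\Psi(\omega;z)$ on that disc. Both families of logarithms are bounded there, so exponentiation (uniformly continuous on bounded sets) transfers the uniform convergence to $\Phi(\lambda(n)/n;z)\to\Psi(\omega;z)$ itself.

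\textbf{Main obstacle.} The only genuinely nontrivial step is the interchange of the $n$-limit with the infinite $m$-sum, which hinges on the tail bound \eqref{sum_ab} together with the boundedness of the first two normalized moments of $\lambda(n)$. Both are automatic for a VK sequence, so the argument proceeds cleanly; everything else is routine power-series bookkeeping.
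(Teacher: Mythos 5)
Your proof is correct and follows essentially the same route as the paper: reduce everything to the convergence of the normalized power sums $p_m(\lambda(n))/n^m$ from Theorem \ref{VK_Theorem}, use the tail estimate \eqref{sum_ab} to dominate the series uniformly in $n$, and identify the limit with $\Psi(\omega;\,.\,)$ via the expansion $\log\Psi(\omega;z)=k\sum_{m\geq 1}\widetilde p_m(\omega)z^m/m$ (the paper verifies the same identities through logarithmic derivatives rather than logarithms, a purely cosmetic difference, and your observation that the sum in \eqref{9.5} should start at $m=1$ is a correct reading of a typo).
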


\begin{proof} (1) The left hand side of \eqref{9.5} is  holomorphic on the domain $\{z \in \mathbb{C}:|z|<1/ \max_j |\lambda_j|\}$. As $|p_m(\lambda)| \leq r \max_j|\lambda_j|^m$ for $\lambda \in \mathbb C^r$,  the right hand side is holomorphic on the same domain. Since both sides of \eqref{9.5} have value $1$ in $z=0$, it suffices to verify that they have the same logarithmic derivative. 
Let $\text{log}$ be the principle  holomorphic branch of the logarithm in $\mathbb C \setminus ]-\infty,0].$ Then for $|z|$ small enough, 
$$ \frac{d}{dz} \text{log}\, \Phi(\lambda;z) = \,  \sum_{j=0}^\infty \frac{k\lambda_j}{1-\lambda_j z} \, =\, k \sum_{m=0}^\infty p_{m+1}(\lambda)z^m\,.$$
This is exactly the logarithmic derivative of the right-hand side in \eqref{9.5}.
\smallskip

(2)  For the 	second  assertion, recall from \eqref{sum_ab} that for $m\geq 2$ we may estimate
\begin{equation*} \Big\vert p_m\Bigl(\frac{\lambda(n)}{n}\Bigr)\Big\vert  \leq \Big\vert\frac{\lambda(n)_1}{n}\Big\vert^{m-2} \cdot \frac{p_2(\lambda(n))}{n^2}.\end{equation*} 
Since the right-hand side converges to $|\alpha_1|^{m-2}\delta$ as $n\to\infty,$ the sequence on the left-hand side is uniformly bounded in $n.$ Moreover, for each $\epsilon > 0$ there exists some index $N_\epsilon$ such that
$$ \Big\vert p_m\Bigl(\frac{\lambda(n)}{n}\Bigr)\Big\vert  \leq (|\alpha_1|+\epsilon)^{m-2}\cdot 2 \delta \quad \text{for all } n \geq N_\epsilon\,.$$ Hence for $n \geq N_\epsilon$, the series 
$$ h_n(z) =\sum_{m=1}^\infty p_m\Bigl(\frac{\lambda(n)}{n}\Bigr) \frac{z^m}{m}$$
converges for $|z|<(|\alpha_1|+\epsilon)^{-1}$, and the dominated convergence theorem shows that 
 $$ \lim_{n\to \infty } h_n(z) = \,\sum_{m=1}^\infty \widetilde p_m(\omega) \frac{z^m}{m}\,$$ 
 locally uniformly in $\{|z|<(|\alpha_1|+\epsilon)^{-1}\}$. Thus
\begin{equation}\label{expk} \lim_{n\to\infty} \Phi\Bigl(\frac{\lambda(n)}{n};z\Bigr) \,=\, \exp \Bigl( k \sum_{m=1}^\infty \widetilde p_m(\omega) \frac{z^m}{m}\Bigr)\end{equation}
locally uniformly in the disc $\{ z \in \mathbb C: |z|<1/|\alpha_1|\}.$ 
Now consider $\Psi(\omega;\,.\,)$, which is holomorphic in this disc.
Taking the logarithmic derivative  as in the proof of \cite[Prop. 3.12]{Fa08} and recalling Theorem \ref{VK_Theorem}, we obtain
$$ \frac{d}{dz} \log \Psi(\omega;z) = k\Bigl[\beta + \gamma z -\sum_{l=1}^\infty \bigl(\alpha_l - \frac{\alpha_l}{1-\alpha_l z}\Bigr)\Bigr] = \,k\!\sum_{m=0}^\infty 
\widetilde p_{m+1}(\omega)z^m.$$
The right-hand side in equation \eqref{expk} has the same logarithmic derivative. Since $\Phi\bigl(\frac{\lambda(n)}{n}; 0 \bigr) = 1 = \Psi(\omega;0),$ this proves the stated limit. 
\end{proof}

We now consider the asymptotic  behaviour of the Bessel functions $J_{A_{n-1}}$ as $n\to \infty.$ 
For $z\in \mathbb C^{(\infty)}$, we put
$$ \widehat \Psi(\omega; z) := \prod_{j=1}^\infty \Psi(\omega; z_j),$$
which is actually a finite product.

\begin{theorem}\label{Bessel_A_limit}
Assume that $(\lambda(n))_{n\in \mathbb N}$ is a VK sequence 	with parameters $\,\omega=(\alpha, \beta,\gamma).$ For  $r\in \mathbb N$ and $k>0$ consider the domain $S_{r,k}^\alpha\subset \mathbb C^r$ as defined in Theorem \ref{main}. 
Then the Bessel functions of type $A_{n-1}$ with multiplicity $k$ satisfy 
\begin{equation}\label{main_limit} \lim_{n\to\infty} J_{A_{n-1}}(i\lambda(n), z) = \,\widehat\Psi\Bigl(\omega;\frac{iz}{k}\Bigr) = \prod_{j=1}^\infty e^{i\beta z_j-\frac{\gamma}{2k}z_j^2} \prod_{l=1}^\infty \frac{e^{-i\alpha_lx_j}}{\bigl(1 - \frac{i\alpha_l}{k}z_j\bigr)^k}\,\end{equation}
locally uniformly on $ S_{r,k}^\alpha $.
\end{theorem}

\begin{remark} This theorem is already proven in \cite[Proposition 6.7]{AN21}. Our approach via Jack polynomial expansions allows a  shorter proof, which is given below. It does in particular not require the product formula for Bessel functions used in \cite{AN21}. 
\end{remark}

 \begin{proof} (1) As in \cite[Prop.6.8]{AN21} we first prove that the family $\,\bigl(J_{A_{n-1}}(i\lambda(n), \,.\,)\bigr)_{n\in \mathbb N}$ is uniformly bounded on compact subsets of 
$S_{r,k}^\alpha.$ For this,  we start with a rank-one reduction as in \cite{AN21}. Assume that $n\geq r$ and recall from representation \eqref{int_rep} that there exists a compactly supported probability measure $\mu_n$ on $\mathbb R^n$ such that for all $z\in \mathbb C^r,$
 \begin{equation*} J_{A_{n-1}}(i\lambda(n),z) = \int_{\mathbb R^n} e^{i\langle\xi,(z, \underline 0_{n-r})\rangle} d\mu_n(\xi)
 \end{equation*}	
Hence
 \begin{align*} |J_{A_{n-1}}(i\lambda(n),z)| &\,\leq \, \int_{\mathbb R^n} e^{-\sum_{j=1}^r \xi_j \text{Im}\, z_j} d\mu_n(\xi) \\
 & \,\leq \,\prod_{j=1}^r \Bigl(\int_{\mathbb R^n} e^{-r\xi_j \cdot \text{Im}\,z_j}d\mu_n(\xi)\Bigr)^{\frac{1}{r}} \,=\, \prod_{j=1}^r J_{A_{n-1}}(\lambda(n), -r\text{Im}\, z_j)^{\frac{1}{r}},
 \end{align*}
 where H\"older's inequality was used for the second inequality.  
 In order to prove the claimed boundedness property, it therefore 
 suffices to show that $\bigl(J_{A_{n-1}}(\lambda(n), \,.\,)\bigr)_{n\in \mathbb N}$ is locally uniformly bounded on the interval $ \{ x\in \mathbb R: |x| < \frac{k}{|\alpha_1|}\}.$
 
 For this, we employ the Jack expansion of $J_{A_{n-1}}.$ Recall that in  rank one, the Jack polynomials are the monomials $\mathcal P_{(j)}(x) = x^j,\, j\in \mathbb N_0$.  So from Proposition \ref{Bessel_A_rep}  we obtain for $x\in \mathbb R$ the estimate
\begin{align} \label{estimate_A}
J_{A_{n-1}}(\lambda(n),x) \,& = \, \sum_{j=0}^\infty \frac{C_{(j)}(\lambda(n)) (k)_j}{(kn)_j \, j !}\, x^j \leq \,   \sum_{j=0}^\infty \frac{|C_{(j)}(\lambda(n))| (k)_j}{(kn)^j \, j !}\, |x|^j \notag \\ 
\, & 	= \, \sum_{j=0}^\infty \frac{\big|C_{(j)}\bigl(\frac{\lambda(n)}{n}\bigr)\big| (k)_j}{j !}\,      \Bigl(\frac{|x|}{k}\Bigr)^j \,=\, \sum_{j=0}^\infty \Big\vert g_j\Bigl(\frac{\lambda(n)}{n}\Bigr)\Big\vert 
\Bigl(\frac{|x|}{k}\Bigr)^j \end{align}
with the coefficients $g_j$ from formula \eqref{g_C}.  
 Proposition \ref{9.6} implies that for $n\to \infty$, 
 \begin{equation}\label{psilimit} \sum_{j=0}^\infty g_j\Bigl(\frac{\lambda(n)}{n}\Bigr) z^j \, = \, \Phi\Bigl( \frac{\lambda(n)}{n};z\Bigr) \,\longrightarrow \,\Psi(\omega; z),\end{equation}
  where the convergence is locally uniform in $\{z\in \mathbb C: |z| < 1/|\alpha_1|\}.$
Fix $\rho>0$ with  $\rho < 1/|\alpha_1|.$  Cauchy's inequalities for holomorphic functions and the existence of the limit in \eqref{psilimit} then imply that  
for sufficiently large $n$, 
$$ \Big\vert g_j\Bigl(\frac{\lambda(n)}{n}\Bigr)\Big\vert \, \leq \frac{1}{\rho^j} \cdot \sup_{|z|= \rho} \Big\vert\Phi\Bigl( \frac{\lambda(n)}{n}; z\Bigr)\Big\vert\, \leq \, \frac{C(\rho)}{\rho^j}  $$
with a constant $C(\rho)>0$ independent of $n$ and $j$. 
Together with estimate \eqref{estimate_A}, this proves the assertion and finishes the first step.

\smallskip 

(2) In a second step, we prove the stated convergence result for the sequence 
$$ \varphi_n(z):= J_{A_{n-1}}(i\lambda(n),z),$$ 
which is locally uniformly bounded on  $S_{r,k}^\alpha $ according to step (1). By Montel's theorem we can therefore find a subsequence 
$(\varphi_{n_j})$ which 
 converges locally uniformly to a holomorphic function $\varphi$ on $S_{r,k}^\alpha$.
In some neighborhood of $0\in \mathbb C^r$, this function has a Jack expansion 
$$ \varphi(z) = \sum_{\kappa \in \Lambda_r^+} a_\kappa \mathcal P_\kappa(z)$$
with certain coefficients $a_k \in \mathbb C.$ By the uniform convergence of $(\varphi_{n_j}),$ the coefficients in the Jack expansion of $\varphi_{n_j}$  must converge (as $j\to \infty$) to the corresponding coefficients of $\varphi$. 
In view of Proposition \ref{Bessel_A_rep}, this means that
 \begin{equation*}\lim_{j\to\infty} \frac{i^{|\kappa|} [kr]_\kappa  C_\kappa((\lambda(n_j)))}{[kn_j]_\kappa\,|\kappa|!} \, =\, a_\kappa.\end{equation*}
 But as $\,[kn]_\kappa \sim (kn)^{|\kappa|}$ for $n\to \infty$, we obtain from Theorem \ref{VK_Theorem} that
 $$ a_\kappa = \frac{i^{|\kappa|} [kr]_\kappa  \widetilde C_\kappa(\omega)}{k^{|\kappa|}|\kappa|!} \,.$$
  The Cauchy identity for Jack polynomials, see for instance \cite[Prop. 2.1]{St89}, states for $\lambda \in \mathbb C^{(\infty)}$ and $z\in \mathbb C^r$  with $|z_j|$ small enough  that
 $$ \sum_{\kappa\in \Lambda_r^+} \frac{[kr]_\kappa}{|\kappa|!} C_\kappa(\lambda) \mathcal P_\kappa(z) \, =\, \prod_{j,l} \frac{1}{(1-\lambda_lz_j)^k}\, = \, \prod_{j=1}^r \Phi (\lambda;z_j).$$
 Thus by Proposition \ref{9.6}, 
 $$ \sum_{\kappa\in \Lambda_r^+} \frac{i^{|\kappa|}[kr]_\kappa C_\kappa\bigl(\frac{\lambda(n)}{n}\bigr)}{|\kappa|!\,k^{|\kappa|}} \,\mathcal P_\kappa(z) \, \longrightarrow \, \widehat\Psi\bigl(\omega; \frac{iz}{k}\bigr)	\quad \text{ for $n\to\infty$,}$$
 where the convergence is locally uniform in $z$ in some open neighborhood of $0\in \mathbb C^r.$
 As $\lim_{n\to\infty} C_\kappa\bigl(\frac{\lambda(n)}{n}\bigr) = \widetilde C_\kappa(\omega),$ we conclude that
 $$ \varphi(z) = \widehat\Psi\bigl(\omega; \frac{iz}{k}\bigr)  \quad\text{ for all } z\in S_{r,k}^\alpha.$$ 
 Finally, using Montel's theorem again, we obtain for the full sequence $(\varphi_n)$ that $\varphi_n(z) \, \longrightarrow \, \varphi(z) = \widehat\Psi\bigl(\omega; \frac{iz}{k}\bigr) $ as $n\to \infty,$ locally uniformly in $z\in S_{r,k}^\alpha$. This finishes the proof of the theorem.  
 \end{proof}

\begin{remark}\label{rem_Psi} The above proof shows that for $z \in \mathbb C^r$ with $\|z\|_\infty < \frac{1}{|\alpha_1|}, $ 
$$ \widehat \Psi(\omega;z) = \sum_{\kappa \in \Lambda_r^+}  \frac{[kr]_\kappa }{|\kappa|!}\widetilde C_\kappa (\omega)\mathcal P_\kappa(z).$$
\end{remark}

\begin{remark}\label{nonneg} If the VK sequence $(\lambda(n))_{n\in \mathbb N}$ is nonnegative, then the limit result \eqref{main_limit} holds uniformly on compact subsets of the domain
$$ S_k^\alpha:= \Big\{ z\in \mathbb{C}^{(\infty)} : \|\text{Im}\,z\|_\infty <\frac{k}{|\alpha_1|} \Big\}.$$

\smallskip
This is obtained as follows: Start from Proposition \ref{Bessel_A_rep} for $z\in \mathbb C^r$ and note that $\lambda(n) \geq 0$ implies that $C_\kappa(\lambda(n)) \geq 0$ for all $\kappa \in \Lambda_r^+$, due to the non-negativity of the coefficients of the $C_\kappa$ in their monomial expansion, c.f. \cite{KS97}. Further observe 
that 
$$ [kn]_\kappa \geq (k(n-r+1))^{|\kappa|}$$
for $\kappa \in \Lambda_r^+$.
Similar to the proof of Theorem \ref{Bessel_A_limit}, but without rank-one reduction we therefore obtain
\begin{align*} \big\vert J_{A_{n-1}}(i\lambda(n),z)\big\vert \, & \leq \, \sum_{\kappa \in \Lambda_r^+} \frac{[kr]_\kappa}{|\kappa|!} \, C_\kappa \Bigl(\frac{\lambda(n)}{n-r+1}\Bigr)\cdot \Big\vert \mathcal P_\kappa\Bigl(\frac{\text{Im}\, z}{k} \Bigr)\Big\vert \\
& \leq \, \sum_{\kappa \in \Lambda_r^+} \frac{[kr]_\kappa}{|\kappa|!} \, C_\kappa \Bigl(\frac{\lambda(n)}{n-r+1}\Bigr)\cdot  \mathcal P_\kappa\Bigl(\frac{|\text{Im}\, z|}{k} \Bigr) \\
& = \, \prod_{j=1}^r \Phi\Bigl(\frac{\widetilde \lambda(n)}{n}\,; \frac{|\text{Im}\, z_j|}{k}\Bigr),\end{align*}
where  $\widetilde 
\lambda(n) = \frac{n}{n-r+1}\lambda(n)$ and $\,|\text{Im}\,z|$ is understood componentwise. Observe that $(\widetilde\lambda(n))$ is also Vershik-Kerov with the same VK parameters as $(\lambda(n)).$ So for $n\to\infty,$
$$ \prod_{j=1}^r \Phi\Bigl(\frac{\widetilde \lambda(n)}{n}\,; \frac{|\text{Im}\, z_j|}{k}\Bigr) \,\longrightarrow \,  \widehat \Psi\Bigl(\omega; \frac{|\text{Im}\, z|}{k}\Bigr)$$
 locally uniformly in $\,z\in S_k^\alpha \cap \,\mathbb C^r.$ This shows that the family $(J_{A_{n-1}}(i\lambda(n), \,.\,)$ is uniformly bounded on compact subsets of $S_k^\alpha.$ Proceeding as in the proof of Theorem \ref{Bessel_A_limit} then yields the assertion.  
\end{remark}

Let us now continue with the proof of Theorem \ref{main}. The implication $(2) \Rightarrow (1)$ will be established by the following

\begin{lemma}\label{9.11} Consider a sequence $(\lambda(n))_{n\in \mathbb N}$ such that each $\lambda(n)\in \mathbb R^n$ is decreasing with respect to $\ll.$ Suppose that the sequence of Bessel functions $\, J_{A_{n-1}}(i\lambda(n), \,.\,)$ converges pointwise on $\mathbb R$ to a function which is continuous at $0$. Then $(\lambda(n))$ is a VK sequence.
\end{lemma}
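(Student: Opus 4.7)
Write $\varphi_n(x):=J_{A_{n-1}}(i\lambda(n),x)$ for $x\in\mathbb R$. Since $\varphi_n$ is continuous, positive definite and $\varphi_n(0)=1$, it is the characteristic function of a probability measure $\nu_n$ on $\mathbb R$. From the hypothesis $\varphi_n\to\varphi$ pointwise with $\varphi$ continuous at $0$, L\'evy's continuity theorem yields weak convergence $\nu_n\rightharpoonup\nu$ with $\widehat\nu=\varphi$ and locally uniform convergence $\varphi_n\to\varphi$ on $\mathbb R$. My plan, modeled on the type $A$ argument of \cite{OV96}, is then: (a) turn tightness of $(\nu_n)$ into boundedness of the relevant power-sum data, (b) extract a VK subsequence by a diagonal argument, (c) identify its limit via Theorem~\ref{Bessel_A_limit}, and (d) invoke the injectivity in Lemma~\ref{unique} to conclude that the VK parameters are independent of the subsequence, so that the full sequence is a VK sequence.

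The analytic heart is a Dirichlet integral representation for the restricted Bessel function. Comparing the Taylor expansion of Proposition~\ref{Bessel_A_rep} with the explicit formula~\eqref{g_j} for $g_j$, one identifies
\[\varphi_n(x)=\mathbb E\bigl[e^{ix\langle w,\lambda(n)\rangle}\bigr],\]
where $w$ is Dirichlet$(k,\dots,k)$-distributed on $\Delta_n=\{w\in\mathbb R_{\ge 0}^n:\sum w_j=1\}$, so $\nu_n$ is the push-forward of this Dirichlet measure under $w\mapsto\langle w,\lambda(n)\rangle$. The standard Dirichlet moment formulas give $\mathbb E\langle w,\lambda(n)\rangle=p_1(\lambda(n))/n$ and $\operatorname{Var}\langle w,\lambda(n)\rangle=\frac{1}{nk+1}\bigl(p_2(\lambda(n))/n-p_1(\lambda(n))^2/n^2\bigr)$. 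Using that the marginal $nw_i$ converges in distribution to a $\Gamma(k,k)$ variable, one argues that an unbounded mean $p_1(\lambda(n))/n$ would drive $\nu_n$ to $\pm\infty$, while an unbounded variance with bounded mean would spread $\nu_n$ past any fixed compact window; both contradict tightness. Hence $p_1(\lambda(n))/n$ and $p_2(\lambda(n))/n^2$ are bounded, and the $\ll$-ordering gives $|\lambda(n)_i|\le|\lambda(n)_1|\le\sqrt{p_2(\lambda(n))}$, so each $(\lambda(n)_i/n)_n$ is bounded as well.

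A Cantor diagonal extraction now produces a subsequence $(n_k)$ along which $\lambda(n_k)_i/n_k\to\alpha_i^\star$ for every $i$, $p_1(\lambda(n_k))/n_k\to\beta^\star$ and $p_2(\lambda(n_k))/n_k^2\to\delta^\star$. Putting $\gamma^\star:=\delta^\star-\sum_i(\alpha_i^\star)^2\ge 0$ (by Lemma~\ref{VK_Lemma}), the subsequence $(\lambda(n_k))_k$ is a VK sequence with parameters $\omega^\star=(\alpha^\star,\beta^\star,\gamma^\star)$. Theorem~\ref{Bessel_A_limit} then forces $\varphi_{n_k}(x)\to\Psi(\omega^\star;ix/k)$ on $\mathbb R$, so $\varphi(x)=\Psi(\omega^\star;ix/k)$. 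By Lemma~\ref{unique}(2) the triple $\omega^\star$ is uniquely recoverable from the function $\Psi(\omega^\star;\cdot)$; consequently the same $\omega^\star$ is obtained for every convergent subsequence, and the full sequences $(\lambda(n)_i/n)_n$, $(p_1(\lambda(n))/n)_n$, $(p_2(\lambda(n))/n^2)_n$ all converge. Thus $(\lambda(n))_{n\in\mathbb N}$ is a VK sequence, as required.

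The main obstacle is the boundedness step, because weak convergence of probability measures on $\mathbb R$ does \emph{not} in general imply bounded first or second moments. One must genuinely exploit the Dirichlet structure of $\nu_n$ and the $\Gamma(k,k)$-asymptotics of $nw_i$; a convenient quantitative form would combine Chebyshev's inequality with the lower bound $\liminf_n\mathbb P(nw_1\ge a)=\mathbb P(\Gamma(k,k)\ge a)>0$ to show that any unbounded subsequence of $p_1(\lambda(n))/n$ or of the empirical variance of $\lambda(n)$ would push a non-vanishing portion of $\nu_n$-mass outside any fixed compact window, contradicting L\'evy's theorem.
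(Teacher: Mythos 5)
Your overall architecture --- L\'evy continuity and tightness, boundedness of $p_1(\lambda(n))/n$ and $p_2(\lambda(n))/n^2$, diagonal extraction of a VK subsequence, identification of the subsequential limit via Theorem~\ref{Bessel_A_limit}, and uniqueness via Lemma~\ref{unique} --- is exactly that of the paper, and your Dirichlet identification of $\nu_n$ is correct: the moments $\int\xi^j\,d\nu_n=j!\,g_j(\lambda(n))/(kn)_j$ obtained from Proposition~\ref{Bessel_A_rep} and \eqref{g_C} coincide with those of $\langle w,\lambda(n)\rangle$ for $w\sim\mathrm{Dir}(k,\dots,k)$, and both measures are compactly supported. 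The genuine gap is in the boundedness step, which you yourself flag as the main obstacle: the tools you propose (Chebyshev plus $\liminf_n\mathbb P(nw_1\ge a)>0$) do not suffice. Chebyshev bounds deviations from above (concentration), whereas ruling out $p_2(\lambda(n))/n^2\to\infty$ requires a \emph{lower} bound on the probability that $T_n=\langle w,\lambda(n)\rangle$ lands far from its bulk (anti-concentration); a sequence of measures can be tight while the variances blow up. Moreover, the single-coordinate bound $\mathbb P(nw_1\ge a)\ge c$ gives no control over $T_n$ when the entries of $\lambda(n)$ have mixed signs, since $\sum_{i\ge2}w_i\lambda(n)_i$ can cancel $w_1\lambda(n)_1$. (The sub-case $p_1(\lambda(n))^2\asymp n\,p_2(\lambda(n))\to\infty$ \emph{is} settled by Chebyshev, because there $\operatorname{Var}(T_n)/(\mathbb E T_n)^2\le 2/(kn)\to0$; the hard case is that of large variance with small mean.)

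The standard way to close this, and the one the paper takes, is a fourth-moment (Paley--Zygmund type) argument: from the explicit formula \eqref{g_j} one derives the purely algebraic inequality $g_4(\lambda)\le C\,g_2(\lambda)^2$, hence the kurtosis $\int\xi^4\,d\nu_n\big/\bigl(\int\xi^2\,d\nu_n\bigr)^2$ is bounded in $n$, and Lemma~5.1 of \cite{OO98} converts tightness plus bounded kurtosis into boundedness of the second moments, i.e.\ of $g_2(\lambda(n))/n^2=\tfrac{1}{2n^2}\bigl(k^2p_1^2+kp_2\bigr)(\lambda(n))$, which bounds both normalized power sums simultaneously since both summands are nonnegative. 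Your Dirichlet picture could be used to the same end (apply Paley--Zygmund to the numerator $\sum_i\gamma_i\lambda(n)_i$ in the Gamma representation $w_i=\gamma_i/\sum_j\gamma_j$), but in either formulation the fourth moment, not the Gamma marginal, is the indispensable ingredient. The remainder of your argument (extraction, identification, uniqueness of the VK parameters) is correct and matches the paper.
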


\begin{proof} 
Put $\varphi_n(x) := J_{A_{n-1}}(i\lambda(n), x), \, x\in \mathbb R$ and $\varphi(x):= \lim_{n\to\infty}\varphi_n(x).$ Again in view of \eqref{int_rep}, there are
compactly supported probability  measures $\mu_n $ on $\mathbb R$ such that 
$$ \varphi_n(x) = \int_{\mathbb R} e^{ix\xi} d\mu_n(\xi) \quad \text{for all } x\in \mathbb R.$$
By L\'evy's continuity theorem, there exists a probability measure $\mu$ on $\mathbb R$ such that 
$\mu_n \to \mu$ weakly and 
$$ \varphi(x) = \int_{\mathbb R} e^{ix\xi} d\mu(\xi) \quad \text{for all } x\in \mathbb R.$$
In particular, the family of measures $\{\mu_n: n\in \mathbb N\}$ is tight. Recall the functions $g_j(\lambda)$ from \eqref{g_j}. By  Proposition \ref{Bessel_A_rep} and formula \eqref{g_C} we have
$$ \varphi_n(x) = \sum_{j=0}^\infty \frac{C_{(j)}(\lambda(n))\cdot (k)_j}{(kn)_j\cdot j!} (ix)^j\,=\, \sum_{j=0}^\infty \frac{g_j(\lambda(n))}{(kn)_j} (ix)^j\,.$$
Hence the moments of the measures $\mu_n$ are given by
$$ \int_{\mathbb R} \xi^j d\mu_n(\xi) = j!\, \frac{g_j(\lambda(n))}{(kn)_j}.$$
We now employ Lemma 5.2 of \cite{OO98}. 
From the definition of the functions $g_j$ one can find a constant $C>0$ such that $\, g_4(\lambda) \leq  Cg_2(\lambda)^2$ for all $\lambda \in \mathbb R^{(\infty)}, $ which shows that the quotient
$$ \frac{\int_{\mathbb R} \xi^4 d\mu_n(\xi)}{\Bigl(\int_{\mathbb R} \xi^2 d\mu_n(\xi)\Bigr)^2}$$
is bounded as a function of $n\in \mathbb N.$ Hence we conclude from Lemma 5.1. of \cite{OO98} that
the sequence $\bigl(\int_{\mathbb R} \xi^2 d\mu_n(\xi)\bigr)_{n\in \mathbb N}$ is bounded, which in turn implies that the sequence $\bigl(\frac{g_2(\lambda(n))}{n^2}\bigr)_{n\in \mathbb N}$ is bounded. As $\, 2g_2 = k^2p_1^2 + kp_2,$ 
the sequences
\begin{equation}\label{p_1p_2} \Bigl(\frac{|p_1(\lambda(n))|}{n}\Bigr)_{n\in \mathbb N} \quad \text{ and } \, \Bigl(\frac{p_2(\lambda(n))}{n^2}\Bigr)_{n\in \mathbb N}\end{equation}
are bounded as well. 
Standard compactness arguments and a diagonalization argument imply that $(\lambda(n))_{n\in \mathbb N} $ has a subsequence which is Vershik-Kerov. Finally, consider two such subsequences $(\lambda_l(n))_{n\in \mathbb N}$ with VK parameters $\omega_l, \, l=1,2.$ Then by Theorem \ref{Bessel_A_limit} and our assumptions,
$$ \varphi(x) = \lim_{n\to \infty} J_{A_{n-1}}(i\lambda_l(n),x) = \Psi \Bigl(\omega_l; \frac{ix}{k}\Bigr)\quad \text{ for all }x\in \mathbb R.$$
Hence $\Psi(\omega_1;\,.\,) = \Psi(\omega_2;\,.\,),$ and  Proposition \ref{unique} implies that $\omega_1 = \omega_2.$ It follows that the full sequence $(\lambda(n))_{n\in \mathbb N}$ is Vershik-Kerov. 
\end{proof}

Putting things together, we are now able to finalize the proof of Theorem \ref{main}. 

\begin{proof}[Proof of Theorem \ref{main}] 
The implication $(1) \Rightarrow (2)$ is contained in Theorem \ref{Bessel_A_limit}.  
Implication $(2) \Rightarrow (3)$ is trivial, while $(3) \Rightarrow (1)$ is just Lemma \ref{9.11}. Further, Theorem \ref{VK_Theorem} proves the  implication $(1) \Rightarrow (5).$ The equivalence of statements (4) and (5) is obvious from  Proposition \ref{Bessel_A_rep},  because  the Jack polynomials span the algebra of symmetric functions. It thus remains to prove the implication $(5)\Rightarrow (1).$ For this, suppose that  $(\lambda(n))_{n\in \mathbb N} $ is a sequence with each $\lambda(n) \in \mathbb R^n$ decreasing w.r.t $\ll, $  and such that $\,\lim_{n\to \infty} \frac{f(\lambda(n))}{n^{\text{deg} f}}\,$ exists for all symmetric functions $f.$ Then in particular, the sequences  $\bigl(\frac{p_1(\lambda(n))}{n}\bigr)$ and $\bigl(\frac{p_2(\lambda(n))}{n^2}\bigr)$ are bounded. Again by a compactness argument, $(\lambda(n))$ has a subsequence which is Vershik-Kerov. Suppose $(\lambda_l(n)), \, l=1,2$ are two such subsequences with VK parameters $\omega_l\,.$ Then by Theorem \ref{Bessel_A_limit}, the sequences $\bigl(J_{A_{n-1}}(i\lambda_1(n), \,.\,)\bigr)$ and 
$\bigl(J_{A_{n-1}}(i\lambda_2(n), \,.\,)\bigr)$ converge locally uniformly on $\mathbb R^r$ to the same limit,  because for each $\kappa \in \Lambda_r^+$, the limit
$$ \lim_{n\to\infty} \frac{C_\kappa(\lambda_l(n))}{n^{|\kappa|}}\,=\, \lim_{n\to\infty} \frac{C_\kappa(\lambda(n))}{n^{|\kappa|}}$$
is independent of $l.$ Arguing further as in the proof of Lemma \ref{9.11}, we obtain that $\omega_1 = \omega_2$ and that $(\lambda(n))$ is a VK sequence.This finishes the proof of the theorem. 
 \end{proof}

 We shall now parametrize the possible limit functions in Theorem \ref{Bessel_A_limit}. We put
 $$ \Omega:= \Bigl\{(\alpha, \beta,\gamma): \beta\in \mathbb R, \gamma\geq 0, \alpha=(\alpha_i)_{i\in \mathbb N} \text{ with } \alpha_i \in \mathbb R, \alpha_{i+1}\ll \alpha_i\,, \sum_{i=1}^\infty \alpha_i^2 < \infty\Bigr\}.$$ 
 Note that for $(\alpha, \beta, \gamma) \in \Omega,$ either all entries of $\alpha$ are non-zero, or all entries up to finitely many  are zero.

 \begin{proposition}\label{9.12}
 For any element $\omega=(\alpha, \beta, \gamma) \in \Omega$ there exists a VK sequence $(\lambda(n))$ with $VK$ parameters $\omega.$ 
 	\end{proposition}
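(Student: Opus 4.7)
The claim is an existence (surjectivity) statement, and the natural approach is a direct construction: for given $\omega = (\alpha,\beta,\gamma)\in\Omega$, build $\lambda(n)\in\mathbb R^n$ so that its top coordinates in the $\ll$-order encode the $\alpha_i$'s and its remaining ``bulk'' coordinates are tuned to match the first two moments $\beta$ and $\delta := \gamma + \sum_i\alpha_i^2$ exactly. The $\ll$-decreasing ordering is harmless: one first specifies a multiset of $n$ real numbers and then sorts it.

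\textbf{Construction.} Choose an integer sequence $N_n\to\infty$ with $N_n = o(n)$, $\sigma_{N_n}:=\sum_{i\le N_n}\alpha_i^2 \to \sum_i\alpha_i^2$, and $n\alpha_{N_n}^2\to\infty$ (when infinitely many $\alpha_i$ are nonzero this is possible since $\alpha_N^2 \ge \sigma_\infty/N$ fails only on a sparse set; when only finitely many $\alpha_i$ are nonzero, cap $N_n$ at their number). Then form the multiset
\begin{equation*}
  \Lambda_n \, = \, \{n\alpha_1,\ldots,n\alpha_{N_n}\}\,\cup\,\{\underbrace{+c_n,\ldots,+c_n}_{A_n}\}\,\cup\,\{\underbrace{-c_n,\ldots,-c_n}_{B_n}\}\,\cup\,\{0,\ldots,0\}
\end{equation*}
of cardinality $n$, where $c_n>0$ and $A_n,B_n\in\mathbb N_0$ are chosen to satisfy
\begin{align*}
(A_n-B_n)c_n &= n\bigl(\beta - s_{N_n}\bigr),\\
(A_n+B_n)c_n^2 &= n^2\bigl(\delta - \sigma_{N_n}\bigr),
\end{align*}
with $s_N := \sum_{i\le N}\alpha_i$. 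For $\gamma>0$, fix $A_n+B_n = n - N_n$, which forces $c_n\sim\sqrt{n\gamma}$; the second equation is then satisfied by construction and the first determines $A_n-B_n$ (modulo rounding to integers, which only perturbs sums by $O(c_n)=o(n)$ and sums of squares by $O(c_n^2)=o(n^2)$). For $\gamma=0$ take $A_n=B_n=0$ and replace the bulk by $n-N_n$ copies of a small value $e_n := (\beta-s_{N_n})/(1-N_n/n)$, whose squared contribution to $p_2$ is controlled by Cauchy--Schwarz. Finally, let $\lambda(n)$ be the multiset $\Lambda_n$ sorted in $\ll$-decreasing order.

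\textbf{Verification.} All bulk entries are $O(\sqrt{n})=o(n)$, while $n|\alpha_{N_n}|\gg\sqrt{n}$ by choice of $N_n$. Hence, for large $n$, the $N_n$ top entries of $\lambda(n)$ in the $\ll$-order are exactly $n\alpha_1,\ldots,n\alpha_{N_n}$ (in this order, since $\alpha$ is $\ll$-decreasing), followed by the bulk. For any fixed $i$: either $\alpha_i\neq0$, whence $\lambda(n)_i = n\alpha_i$ eventually and $\lambda(n)_i/n\to\alpha_i$; or $\alpha_i=0$, whence all $\alpha_j=0$ for $j\ge i$ by the $\ll$-ordering of $\alpha$, and $\lambda(n)_i$ is a bulk value, so $\lambda(n)_i/n\to 0 = \alpha_i$. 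The moment equalities give $p_1(\lambda(n)) = n\beta + o(n)$ and $p_2(\lambda(n)) = n^2\delta + o(n^2)$ directly, so $(\lambda(n))$ is a VK sequence with parameters $(\alpha,\beta,\gamma)$.

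\textbf{Main obstacle.} The delicate point is feasibility of the system for $A_n,B_n$: we need $|A_n-B_n|\le A_n+B_n\le n-N_n$. Cauchy--Schwarz yields $|s_{N_n}|\le\sqrt{N_n\sigma_{N_n}} = O(\sqrt{N_n})$, so $|A_n-B_n| = n|\beta-s_{N_n}|/c_n = O(\sqrt{nN_n}/\sqrt{\gamma})$, which is $\ll n$ because $N_n = o(n)$. Simultaneously we must ensure $c_n < n|\alpha_{N_n}|$ so that the $\alpha$-entries genuinely dominate the bulk in the $\ll$-order; this reduces to $\alpha_{N_n}^2 \gg \gamma/n$, which is exactly our growth condition on $N_n$. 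Handling $\gamma=0$ separately (or with a single-value bulk) is the only case distinction.
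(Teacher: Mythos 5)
Your construction is correct, but it takes a genuinely different route from the paper. The paper splits the proof into three cases: for $\alpha=0$ it builds small alternating entries whose sum and sum of squares match $\beta$ and $\gamma$ via the Riemann rearrangement theorem, then passes to a VK sequence by a limiting argument; for finitely many nonzero $\alpha_i$ it prepends the block $(n\alpha_1,\ldots,n\alpha_m)$ to such a sequence with $\beta$ shifted to $\beta-\sum_{i\le m}\alpha_i$; and for infinitely many nonzero $\alpha_i$ it diagonalizes over the truncations $\alpha^{(m)}$. You instead give a single explicit construction: a slowly growing top block $(n\alpha_1,\ldots,n\alpha_{N_n})$ together with a two-valued bulk $\pm c_n$ whose multiplicities solve a $2\times2$ moment system, so that $p_1$ and $p_2$ are matched (essentially) exactly. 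This avoids both the rearrangement theorem and the nested diagonalizations, at the cost of the feasibility check $|A_n-B_n|\le n-N_n$ and the separation condition $c_n<n|\alpha_{N_n}|$, both of which you verify correctly; your bulk of $\pm c_n$ atoms plays the same role as the paper's alternating sequence in its step (i). Each approach works; yours is more quantitative and self-contained, the paper's is more modular.

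Two minor points, neither of which invalidates the argument. First, your parenthetical justification for the existence of $N_n$ (``$\alpha_N^2\ge\sigma_\infty/N$ fails only on a sparse set'') is garbled --- in fact $N\alpha_N^2\le\sigma_N$ always, so that inequality essentially never holds; but the claim you actually need (some $N_n\to\infty$ with $N_n=o(n)$ and $n\alpha_{N_n}^2\to\infty$, when all $\alpha_i\ne0$) is true by the standard diagonal choice: pick $n_N$ increasing with $n\alpha_N^2\ge N$ for $n\ge n_N$ and set $N_n:=\max\{N: n_N\le n\}$, which also gives $N_n/n\le\alpha_{N_n}^2\to0$ automatically. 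Second, the integer rounding of $A_n,B_n$ requires matching the parity of $A_n\pm B_n$, but as you note this perturbs $p_1$ by $O(c_n)=o(n)$ and $p_2$ by $O(c_n^2)=o(n^2)$, which is harmless since the VK conditions only require the normalized limits to exist.
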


 \begin{proof}
 We divide the proof into several steps. 
 
 \smallskip
 \noindent
 (i) Assume that $\alpha = 0.$ Then for arbitrary $\epsilon >0,$ there exists a sequence $x=(x_i)_{i\in \mathbb N}$ in $\mathbb R$ such that
 \begin{equation}\label{rearrange} |x_i|\leq \epsilon \,\, \text{ for all }\, i \in \mathbb N, \>\> \sum_{i=1}^\infty x_i = \beta \quad\text{and } \,\, \sum_{i=1}^\infty x_i^2 = \gamma.\end{equation}
 To see this, choose $N \in \mathbb N$ such that $\, \bigl(\frac{6\gamma}{\pi^2 N}\bigr)^{1/2}\leq \epsilon\, $ and start with the alternating sequence
 $$ x_i^\prime := \bigl(\frac{6\gamma}{\pi^2 N}\bigr)^{1/2}\cdot\frac{(-1)^i}{k+1}\quad \, \text{ if } \> k < \frac{i}{N}\leq k+1,\,  k \in \mathbb N_0.\,$$  
 It satisfies the first and the third condition of \eqref{rearrange}, and by the Riemann rearrangement theorem, there exists a rearrangement $(x_i)_{i\in \mathbb N}$
of $(x_i^\prime)_{i\in \mathbb N}$ satisfying the second condition as well. 
For each $m \in \mathbb N$ we can therefore find a real sequence $x^{(m)}= (x_i^{(m)})$ and and index $n_m \in \mathbb N$ with $n_m\to \infty $ for $m \to \infty,$ such that for all $n \geq n_m\,$,
$$ \big\vert  x_i^{(m)} \big\vert \leq \frac{1}{m} \quad \text{for all }\,i\in \mathbb N, \quad \vert \sum_{i=1}^n x_i^{(m)} \, -\beta \vert \,\leq \frac{1}{m}\,, \quad  \vert \sum_{i=1}^n \bigl(x_i^{(m)}\bigr)^2 \, -\gamma \vert\, \leq  \frac{1}{m}\,.$$
We may also assume that $n_{m+1} > n_m$ for all $m$. Rearranging the entries of each tuple $(x_1^{(m)}, \ldots, x_{n_m}^{(m)})$ according to  $\ll,$ we thus obtain a sequence $(\lambda(n_m)^\prime)_{m\in \mathbb N}$ where each $\lambda(n_m)^\prime\in \mathbb R^{n_m}$ is decreasing w.r.t. $\ll$  and satisfies
\begin{align*}
&\lim_{m\to \infty}\lambda(n_m)_i^\prime = 0 \quad \text{ for all } \, i \in \mathbb N,\\	
&\lim_{m\to \infty} \sum_{i=1}^{n_m} \lambda(n_m)_i^\prime \, = \beta, \\
& \lim_{m\to \infty} \sum_{i=1}^{n_m} \bigl(\lambda(n_m)_i^\prime\bigr)^2 \, = \gamma.
\end{align*}
Finally, put $ \, \lambda(n_m):= n_m \lambda(n_m)^\prime $ and $\,\lambda(n):= (n\lambda(n_m)^\prime, 0, \ldots, 0) \in \mathbb R^n $ for $n_m < n < n_{m+1}.$ 
Then $(\lambda(n))_{n \geq n_1} \,$ is a VK sequence with parameters $(\alpha = 0, \beta, \gamma).$ 

\smallskip\noindent
(ii) Assume that $\alpha$ has finitely many non-zero entries and let $m\in \mathbb N$ be maximal such that $\alpha_m \not=0.$ Let $(\lambda(n)^\prime)_{n\in \mathbb N}$ be a VK sequence with parameters $(0, \beta^\prime, \gamma),$ where $\, \beta^\prime = \beta - \sum_{i=1}^m \alpha_i.$ For $n>m,$ put
$$ \lambda(n):= (n\alpha_1, \ldots, n\alpha_m, \lambda(n)_1^\prime, \ldots, \lambda(n)_{n-m}^\prime).$$
For $n$ large enough, say $n \geq n_0$, the entries of $\lambda(n)$ are decreasing with respect to $\ll\,,$  because $\lim_{n\to \infty} \frac{ \lambda(n)_i^\prime}{n} = 0.$ Then $(\lambda(n))_{n\geq n_0}$ is Vershik-Kerov with parameters $(\alpha, \beta, \gamma).$

\smallskip\noindent
(iii) Assume that all entries of $\alpha$ are non-zero. For $m \in \mathbb N, $ put $\,\omega^{(m)}:=(\alpha^{(m)}, \beta, \gamma),$ where $\alpha^{(m)} = (\alpha_1, \ldots, \alpha_m, 0, \ldots \,).$ According to part (ii), there exists a VK sequence 
$(\lambda^{(m)}(n))_{n\in \mathbb N} $ with VK parameters $\omega^{(m)}.$ By a diagonalization argument we obtain a sequence $\, \lambda(n_m) := \lambda^{(m)}(n_m)$ with $n_{m+1} > n_m$ satisfying
\begin{align*} & \lim_{m\to \infty}\lambda(n_m)_i\, = \alpha_i \quad \text{ for all } i\in \mathbb N,\\
&\lim_{m\to \infty} \sum_{i=1}^{n_m} \lambda(n_m)_i \, = \, \beta, \\
& \lim_{m\to \infty} \sum_{i=1}^{n_m} \bigl(\lambda(n_m)_i\bigr)^2  =  \delta = \gamma 
+ \sum_{i=1}^\infty \alpha_i^2. \end{align*}
Finally, for $n \in \mathbb N$ with $n_m \leq n < n_{m+1}$ put $\, \lambda(n):= \bigl( \frac{n}{n_m}\lambda(n_m), 0, \ldots, 0\bigr)\in \mathbb R^n.$ Then $(\lambda(n))_{n\geq n_1}$ is Vershik-Kerov with VK parameters $(\alpha, \beta, \gamma).$   
\end{proof}

Together with Lemma \ref{unique}, this result shows that the possible limits (for $n \to \infty$) of the Bessel functions $J_{A_{n-1}}(i\lambda(n),x)$ with $x \in \mathbb R^r$ and $ \lambda(n) \in \mathbb R^n$ are exactly all the infinite products $ \, \widehat \Psi\bigl(\omega;\frac{ix}{k}\bigr), $ of Theorem \ref{Bessel_A_limit}, 
which are in bijective correspondence with the parameters $\omega\in\Omega.$ 
 
Let us finally come back to the Olshanski spherical pair $(G_\infty, K_\infty)$ 
as in  \eqref{Olshanski_A}. From our results, we obtain  the following corollary. Its first part goes back to  Pickrell \cite{Pi91}, as already mentioned.

\begin{corollary} \label{Cor_A} \begin{enumerate}
\item[\rm{(1)}] The set of positive definite spherical functions of the Olshanski spherical pair
$(G_\infty, K_\infty)=(U_\infty(\mathbb F)\ltimes \text{Herm}_\infty(\mathbb F), U_\infty(\mathbb F)),$ considered  as $U_\infty(\mathbb F)$-invariant functions on $\text{Herm}_\infty(\mathbb F)$, is parametrized by the set $\Omega$ via
$$ \varphi_\omega(X)  = \prod_{j=1}^\infty e^{i\beta x_j - \frac{\gamma}{d} x_j^2} \prod_{l=1}^\infty \frac{e^{-i\alpha_lx_j}}{(1-i\frac{2}{d}\alpha_l x_j)^{d/2}}\,, \quad \omega = (\alpha, \beta, \gamma) \in \Omega,$$
where $(x_1, x_2, \ldots)\in \mathbb R^{(\infty)}$ are the eigenvalues of $X\in \text{Herm}_{\infty}(\mathbb F),$ decreasingly ordered by size and counted according to their multiplicity. 
\item[\rm{(2)}] Consider a sequence  of positive definite spherical functions 
\begin{equation}\label{Olshanski_AA} \varphi_n(X) = J_{A_{n-1}}\bigl(\frac{d}{2}; i\lambda(n), \sigma(X)\bigr), \quad \lambda(n)\in \mathbb R^n	
\end{equation}
 of the Gelfand pairs $(G_n, K_n) = (U_n(\mathbb F)\times \text{Herm}_n(\mathbb F), U_n(\mathbb F)).$ Then $(\varphi_n)$ converges uniformly on compact subsets of $\text{Herm}_\infty(\mathbb F)$ if and only if 
$(\lambda(n))$ is, up to permutation of the entries, a Vershik-Kerov sequence. In 
the  case of convergence, the limit is given by $\varphi_\omega,$ with $\omega$ the VK parameters of $(\lambda(n)).$

	\end{enumerate}

\end{corollary}
	
	\begin{proof} (1)
	For a topological group $H$ 
	consider the set $$ P_1(H) = \{\varphi\in C(H): \varphi \,\text{ positive definite}, \varphi(e)=1\}$$
	and denote by $\text{ex}(P_1(H))$ the set of its extremal points. 
		In   \cite[Theorem 22.10]{Ol90} it is proven that 
	each $\varphi \in \text{ex}(P_1(G_\infty))$ can be approximated uniformly on compact sets
	by a sequence of functions $\varphi_n \in \text{ex}(P_1(G_n)).$	 
	An inspection of the proof shows that this statement remains true for biinvariant functions, i.e. each $K_\infty$-biinvariant $\varphi \in \text{ex}(P_1(G_\infty))$ can be approximated uniformly on compact sets
	by a sequence of $K_n$-biinvariant functions $\varphi_n \in \text{ex}(P_1(G_n)).$  According to  \cite[Theorems 23.3 and 23.6]{Ol90}, the 	positive definite spherical functions of a spherical pair	$(G, K)$ (an Olshanski spherical pair or a Gelfand pair) are exactly those elements of $\text{ex}(P_1(G))$ which are $K$-biinvariant. Thus, for a positive definite spherical function $\varphi$ of $(G_\infty, K_\infty),$ there 
	exists a sequence $(\varphi_n)$  of positive definite spherical functions of $(G_n, K_n)$ which converges uniformly on compact sets to $\varphi.$	(This is also noted in \cite[Theorem 3.5]{OV96}). By Lemma \ref{A_spherical_char}, $\varphi_n$ is given by a positive definite Bessel function $J_{A_{n-1}}$ with multiplicity $k= \frac{d}{2}, \, d= \text{dim}_\mathbb R\mathbb F,$ i.e.
	it is of the form \eqref{Olshanski_AA}. Without loss of generality we may assume that  $\lambda(n)$ is decreasing w.r.t $\ll \,.$  From Theorem \ref{main} it now follows that $(\lambda(n))$ has to be a VK sequence and that $\varphi = \varphi_\omega\,,$ where $\omega$ are the VK parameters of $(\lambda(n)).$ 
	
	Conversely, starting with $\omega\in \Omega$ we may choose an associated VK sequence $(\lambda(n))$ by Proposition \ref{9.12}. 
	Then \eqref{Olshanski_AA}
	defines a sequence $(\varphi_n)$ of positive definite 
	spherical functions of $(G_n, K_n)$ which converge to $\varphi_\omega$ uniformly on compact sets according to Theorem \ref{main}.
	It is then clear from the definitions that $\varphi_\omega$ is a positive definite Olshanski spherical function of $(G_\infty, K_\infty).$ This finishes the proof of Part (1).

	\smallskip
	Part (2) is immediate from Theorem \ref{main}.
	\end{proof}
	
\section{The type $B$ case}\label{type B}

Recall from Section \ref{Bessel}
the Bessel functions $J_{B_n}$ of type $B_n$.  As $n \to \infty$, we shall consider them with the multiplicities $\kappa_n:=(k_n^\prime,k)$ with value $k>0$ on the roots $\pm (e_i\pm e_j) $ and $k_n^\prime \geq 0$ on the roots $\pm e_i.$ It will become clear at the end of this section why the multiplicity parameter $k_n$ is allowed to vary with $n.$ With $$\,\nu_n := k_n^\prime +k(n-1) + \frac{1}{2}$$ we have
\begin{equation}\label{BesselB}
J_{B_n}(\kappa_n;\lambda, z)\, = \, \sum_{\kappa \in \Lambda_n^+} \frac{1}{4^{|\kappa|}[\nu_n]_\kappa}   \frac{C_\kappa(\lambda^2) C_\kappa(z^2)}{|\kappa|!\, C_\kappa(\underline 1_n)},
\end{equation}
where the Jack polynomials are of index $1/k.$ Recall the stability property \eqref{stability} of the Jack polynomials. 
Adopting the notation from \eqref{short}, we therefore have for $\lambda\in \mathbb C^n$ and $z \in \mathbb C^r$ with $r \leq n$ the representation 
\begin{equation}\label{rep_B} J_{B_n}(\kappa_n; \lambda,z) := J_{B_n}(\kappa_n; \lambda,(z, \underline 0_{n-r})= \, \sum_{\kappa \in \Lambda_r^+} 
\frac{C_\kappa(\lambda^2) [kr]_\kappa}{4^{|\kappa|}[kn]_\kappa [\nu_n]_\kappa |\kappa|!}\, \mathcal P_\kappa(z^2).\end{equation}
Recall our notion \eqref{chamber} for the positive Weyl chamber of type $B.$
The following counterpart of Theorem \ref{main} will be the main result of this section.

\begin{theorem}\label{main_B} Consider the Bessel functions $J_{B_n}(\kappa_n; \,.\,,\,.\,)$ with multiplicity $\kappa_n=(k_n^\prime, k),$ where $k>0$ and $k_n^\prime \geq 0$.  Further, 
let $(\lambda(n))_{n\in \mathbb N}$ be a sequence of spectral parameters $\lambda(n) \in \overline C_n$, i.e. the entries of $\lambda(n)$ are non-negative and decreasing. 
 Then the following statements are equivalent. 
\begin{enumerate} \itemsep=2pt

\item[{\rm(1)}]  $\bigl(\frac{\lambda(n)^2}{\nu_n}\bigr)_{n\in \mathbb N}$ is  a Vershik-Kerov sequence.  
\item[{\rm(2)}] The sequence of Bessel functions $\bigl(J_{B_{n}}(\kappa_n;i\lambda(n), \,.\,)\bigr)_{n\in \mathbb N}$ converges locally uniformly on each of the spaces $\mathbb R^r$, $r\in \mathbb N.$ 
\item[{\rm(3)}] The sequence of Bessel functions $\bigl(J_{B_{n}}(\kappa_n;i\lambda(n), \,.\,)\bigr)_{n\in \mathbb N}$ converges pointwise on $\mathbb R$ against  a function which is continuous at $0.$ 
\item[{\rm(4)}] For each fixed multi-index of length $r$, the corresponding coefficients in the Taylor of expansion of $J_{B_{n}}(\kappa_n;i\lambda(n), \,.\,)\,$ around $0\in \mathbb R^r$ converge as $n \to \infty.$ 
\item[{\rm(5)}] For all symmetric functions $f: \mathbb R^{(\infty)} \to \mathbb C,$ the limit $$ \lim_{n\to\infty} \frac{f(\lambda(n)^2)}{(n\nu_n)^{\text{deg} f}}$$ exists. 
\end{enumerate}
In this case, let $\, \omega = (\alpha, \beta, \gamma)$ be the VK parameters of the sequence $\bigl(\frac{\lambda(n)^2}{\nu_n}\bigr).$
Then $\gamma=0, \, \alpha_l \geq 0$ for all $l$, and 
\begin{equation}\label{limit_B} 
\lim_{n\to \infty} J_{B_{n}}(\kappa_n; i\lambda(n),z) \,=\, \widehat \Psi \Bigl(\omega; - \frac{z^2}{4k}\Bigr)\, =\,  \prod_{j=1}^\infty e^{-\frac{\beta z_j^2}{4}} \prod_{l=1}^\infty \frac{e^{\frac{\alpha_l z_j^2}{4}}}{\bigl(1+ \frac{\alpha_l z_j^2}{4k}\bigr)^k}\,,\qquad 
\end{equation} 
where the convergence is  uniform  on compact subsets of the domain
$$ S_{k,+}^\alpha :=  \Big\{ z\in \mathbb{C}^{(\infty)} : \|\text{Im}\,z\|_\infty < \, 2\,\sqrt{ \frac{k}{\alpha_1}} \,\Big\}.$$

\end{theorem}

\begin{remarks}\label{remB} 
1. It is a consequence of Lemma \ref{VK_Lemma}
that the VK parameter $\gamma$ is $0$ in the present situation.

2. We do not have any restrictions on the asymptotic behavior of $\nu_n$ apart from the condition $k_n^\prime \geq 0,$ by which $\nu_n$ grows at least linearly. Only the set of spectral parameters for which the Bessel functions converge depends on $\nu_n$ via condition $(1)$ of the Theorem.  The possible set of limits in \eqref{limit_B} depends only on the VK parameters $\omega$.

3. Assume that $$ \lim_{n\to\infty}\frac{k_n^\prime}{n} = C\,$$ with some constant $C \geq 0.$  Then $\, \nu_n \sim (C+k)n\, $ for $n \to \infty. $ In particular, $\bigl(\frac{\lambda(n)^2}{\nu_n}\bigr)$	is Vershik-Kerov with VK parameters $(\alpha, \beta, 0)$ if and only if $\bigl(\frac{\lambda(n)^2}{n}\bigr)$	is Vershik-Kerov with VK parameters $\bigl((C+k)\alpha, (C+k)\beta, 0\bigr).$
\end{remarks}

For the proof of Theorem \ref{main_B}, we start with the following

\begin{lemma}\label{9.17} Let $(\lambda(n))_{n\in \mathbb N}$ with $\lambda(n) \in \overline C_n$ be such that the sequence $\bigl(\frac{\lambda(n)^2}{\nu_n}\bigr)$	is Vershik-Kerov  with VK parameters $\omega=(\alpha, \beta, 0).$	 
Then for $z \in S_{k,+}^\alpha$ we have
$$  \lim_{n\to\infty} J_{B_{n}}(\kappa_n;i\lambda(n), z) = \,\widehat\Psi\Bigl(\omega;-\frac{z^2}{4k}\Bigr). $$
The convergence is uniform on compact subsets of $S_{k,+}^\alpha.$ 

\end{lemma}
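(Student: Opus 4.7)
The plan is to proceed in close analogy with the proof of Theorem \ref{Bessel_A_limit}, using the series representation \eqref{rep_B} as a substitute for the Cauchy identity argument employed there. Since the Jack polynomial $C_\kappa$ is homogeneous of degree $|\kappa|$, substituting $\lambda = i\lambda(n)$ into \eqref{rep_B} yields
$$ J_{B_n}(\kappa_n; i\lambda(n), x) \,=\, \sum_{\kappa \in \Lambda_r^+} \frac{(-1)^{|\kappa|}[kr]_\kappa \, C_\kappa(\lambda(n)^2)}{4^{|\kappa|}[kn]_\kappa [\nu_n]_\kappa \,|\kappa|!}\,\mathcal P_\kappa(x^2), $$
so the task reduces to controlling the asymptotic behaviour of each Taylor coefficient at $0$ and then invoking the positivity-based upgrade provided by Lemma \ref{Taylorconv}.

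For the coefficient asymptotics, I would apply Theorem \ref{VK_Theorem} to the VK sequence $(\lambda(n)^2/\nu_n)_{n\in \mathbb N}$ with parameters $\omega = (\alpha, \beta, \gamma)$ to conclude that
$$ \lim_{n\to\infty} \frac{C_\kappa(\lambda(n)^2)}{(n\,\nu_n)^{|\kappa|}} \,=\, \widetilde C_\kappa(\omega).$$
Since $\lambda(n)\in \overline C_n$ makes the entries of $\lambda(n)^2/\nu_n$ non-negative, Lemma \ref{VK_Lemma}(ii) yields $\gamma = 0$, while $\alpha_l \geq 0$ for all $l$ is immediate. As $\nu_n \to \infty$, the standard Pochhammer asymptotics give $[kn]_\kappa \sim (kn)^{|\kappa|}$ and $[\nu_n]_\kappa \sim \nu_n^{|\kappa|}$, so the coefficient of $\mathcal P_\kappa(x^2)$ converges to $\,(-1)^{|\kappa|}[kr]_\kappa \widetilde C_\kappa(\omega)/\bigl((4k)^{|\kappa|} |\kappa|!\bigr)$. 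Using homogeneity of $\mathcal P_\kappa$, the formal limit series rearranges to
$$ \sum_{\kappa \in \Lambda_r^+} \frac{[kr]_\kappa \widetilde C_\kappa(\omega)}{|\kappa|!}\, \mathcal P_\kappa\!\left(-\frac{x^2}{4k}\right),$$
which, by the remark following Theorem \ref{Bessel_A_limit}, coincides with $\widehat\Psi(\omega; -x^2/(4k))$ in some neighborhood of $0 \in \mathbb R^r$.

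To promote this to the claimed locally uniform convergence, I would verify that the candidate limit extends analytically to all of $\mathbb R^r$: since $\alpha_l \geq 0$ for all $l$, Lemma \ref{unique} shows that $\Psi(\omega; \cdot)$ is holomorphic on $\widetilde S = \mathbb C \setminus [1/\alpha_1, \infty)$, and the substitution $-x_j^2/(4k) \in \,]-\infty, 0]\,$ for real $x_j$ safely avoids this cut. Each $J_{B_n}(\kappa_n; i\lambda(n), \cdot)$ is positive definite on $\mathbb R^r$ for $n \geq r$ and equals $1$ at the origin, so Lemma \ref{Taylorconv} promotes convergence of the Taylor coefficients at $0$ to locally uniform convergence on $\mathbb R^r$. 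The main technical point I expect to handle is the passage from termwise convergence of the Jack polynomial expansion to convergence of every ordinary Taylor coefficient at $0$; this is, however, immediate because the polynomials $\mathcal P_\kappa(x^2)$ of bounded $|\kappa|$ are finite in number and span the corresponding space of $W$-invariant polynomials in $x$, exactly as in the type $A$ proof. The only genuinely new input compared to Section \ref{type A} is the appearance of the factor $[\nu_n]_\kappa$ in the denominator, but its handling is routine once one observes that the VK hypothesis has been posed precisely in the scaling $\lambda(n)^2/\nu_n$.
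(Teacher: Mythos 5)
Your proposal is correct and follows essentially the same route as the paper: expand $J_{B_n}(\kappa_n;i\lambda(n),\cdot)$ via \eqref{rep_B}, use $[\nu_n]_\kappa\sim\nu_n^{|\kappa|}$, $[kn]_\kappa\sim(kn)^{|\kappa|}$ and the limit \eqref{Jacklimit} for $C_\kappa\bigl(\lambda(n)^2/(n\nu_n)\bigr)$ to get Taylor-coefficient convergence toward $\widehat\Psi(\omega;-x^2/(4k))$, identify the limit via the remark after Theorem \ref{Bessel_A_limit}, check analyticity on $\mathbb R^r$ through Lemma \ref{unique} using $\alpha_l\geq 0$, and upgrade to locally uniform convergence by positive definiteness and Lemma \ref{Taylorconv}. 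This matches the paper's argument step for step.
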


\begin{proof}  Fix $r \in \mathbb N.$ For  $z \in S_{k,+}^\alpha \cap \mathbb C^r$ and $n \geq r$  consider the  functions 
\begin{equation}\label{phi_n_B} \varphi_n(z):= J_{B_n}(\kappa_n; i\lambda(n), z)\, = \, \sum_{\kappa \in \Lambda_r^+} 
\frac{[kr]_\kappa}{|\kappa|!} \cdot  \frac{C_\kappa(\lambda(n)^2)}{[kn]_\kappa [\nu_n]_\kappa}\,\mathcal P_\kappa\Bigl(\frac{-z^2}{4}\Bigr).
\end{equation} 
 
 By \eqref{J_estimate}, the non-negativity of the monomial coefficients of the Jack polynomials and the non-negativity of $\lambda(n)$, we estimate as in Remark \ref{nonneg}:
 \begin{align}\label{phi_n_B_abs} |\varphi_n(z)| \, &\leq \,\varphi_n(-\text{Im}\, z)\notag  \\ 
& \leq \, \sum_{\kappa \in \Lambda_r^+} \frac{ [kr]_\kappa}{|\kappa|!}\,
C_\kappa\Bigl( \frac{\lambda(n)^2}{(n-r+1)(\nu_n-k(r-1))}\Bigr)\mathcal P_\kappa \Bigl( 
\frac{(\text{Im}\, z)^2}{4k}\Bigr) \notag \\
& = \, \prod_{j=1}^r \Phi \Bigl(\frac{\widetilde \lambda(n)}{n}; \frac{(\text{Im}\, z)^2}{4k}\Bigr)
\end{align}
with $$ \widetilde \lambda(n) := \frac{n\nu_n}{(n-r+1)(\nu_n-k(r-1))}\cdot \frac{\lambda(n)^2}{\nu_n}.$$ 
By our assumption on $\lambda(n)$, the sequence $(\widetilde \lambda(n))$ is Vershik-Kerov with parameters $\omega = (\alpha, \beta, 0).$ Hence by Proposition \ref{9.6},  the product on the right side of \eqref{phi_n_B_abs} converges for $n \to \infty$ to 
$\,\widehat \Psi\bigl(\omega; \frac{(\text{Im}\, z)^2}{4k}\bigr),$  uniformly on compact subsets of $  \in S_{k,+}^\alpha.$ In particular, the sequence $(\varphi_n)$ is uniformly bounded on compact subsets of $S_{k,+}^\alpha$. Now consider the coefficients in the expansion of $\varphi_n$. Theorem \ref{VK_Theorem} and the asymptotics $\, [kn]_\kappa[\nu_n]_\kappa \sim (kn\nu_n)^\kappa\, $ yield
$$ \lim_{n\to \infty} \frac{C_\kappa(\lambda(n)^2)}{[kn]_\kappa[\nu_n]_\kappa} \, = \, \frac{\widetilde C_\kappa (\omega)}{k^{|\kappa|}}\,.$$
By a Montel argument as in the proof of Theorem \ref{Bessel_A_limit} and with Remark 
\ref{rem_Psi} in mind, we thus obtain that
$$ \lim_{n\to \infty} \varphi_n(z) \, = \, \widehat \Psi \Bigl(\omega; - \frac{z^2}{4k}\Bigr)$$
locally uniformly on $ S_{k,+}^\alpha \cap \mathbb C^r.$ 
\end{proof}

\begin{lemma}\label{9.18} Consider a sequence $(\lambda(n))_{n\in \mathbb N}$ with $\lambda(n)\in \overline C_n\,.$   Assume that the sequence of Bessel functions $\, J_{B_n}(i\lambda(n), \,.\,)$ converges pointwise on $\mathbb R$ to a function which is continuous at $0$. Then the sequence $\bigl(\frac{\lambda(n)^2}{\nu_n}\bigr)$ is  Vershik-Kerov.
\end{lemma}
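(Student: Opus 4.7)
The plan is to mirror the proof of Lemma \ref{9.11}. Put $\varphi_n(x):=J_{B_n}(\kappa_n;i\lambda(n),x)$ for $x\in\mathbb R.$ Each $\varphi_n$ is positive definite with $\varphi_n(0)=1,$ so \eqref{int_rep} supplies compactly supported probability measures $\mu_n$ on $\mathbb R$ with $\varphi_n(x)=\int_\mathbb R e^{ix\xi}\,d\mu_n(\xi).$ Only even powers of $x$ appear in \eqref{phi_n_B} for $r=1,$ hence each $\mu_n$ is symmetric about $0.$ The pointwise convergence assumption and L\'evy's continuity theorem then produce a probability measure $\mu$ on $\mathbb R$ with $\mu_n\to\mu$ weakly, so in particular $\{\mu_n\}$ is tight.

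The key step is to extract bounds on the power sums of $\lambda(n)^2$ from the moments of $\mu_n.$ Specializing \eqref{phi_n_B} to $r=1,$ and using $C_{(j)}(z_1)=z_1^j,$ $\mathcal P_{(j)}(x^2)=x^{2j}$ and $[\mu]_{(j)}=(\mu)_j,$ I obtain
$$ \varphi_n(x)=\sum_{j=0}^\infty \frac{(-1)^j (k)_j\,C_{(j)}(\lambda(n)^2)}{4^j\,j!\,(kn)_j(\nu_n)_j}\,x^{2j},\qquad \int_\mathbb R \xi^{2j}d\mu_n(\xi)=\frac{(2j)!\,(k)_j\,C_{(j)}(\lambda(n)^2)}{4^j\,j!\,(kn)_j(\nu_n)_j}.$$
For $j=1,2,$ together with $C_{(1)}(z)=p_1(z)$ and $C_{(2)}(z)=(p_2(z)+kp_1(z)^2)/(k+1),$ this reads
$$ \int \xi^2 d\mu_n =\frac{p_1(\lambda(n)^2)}{2n\nu_n}, \quad \int \xi^4 d\mu_n =\frac{3k\bigl(p_2(\lambda(n)^2)+k\,p_1(\lambda(n)^2)^2\bigr)}{4(kn)(kn+1)\nu_n(\nu_n+1)}.$$
Since $\lambda(n)^2$ has non-negative entries, $p_2(\lambda(n)^2)\leq p_1(\lambda(n)^2)^2,$ so the quotient $\int\xi^4d\mu_n/(\int\xi^2d\mu_n)^2$ stays bounded in $n.$ By Lemma~5.1 of \cite{OO98} this forces $\int\xi^2 d\mu_n$ and then $\int\xi^4 d\mu_n$ to be bounded, which translates into uniform bounds on $p_1(\lambda(n)^2)/(n\nu_n)$ and $p_2(\lambda(n)^2)/(n\nu_n)^2.$

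Because $\lambda(n)\in\overline C_n,$ the entries of $\lambda(n)^2/\nu_n$ are non-negative and decreasing, and $\lambda(n)_i^2/(n\nu_n)\leq p_1(\lambda(n)^2)/(n\nu_n)$ is bounded for each fixed $i.$ Diagonal extraction then yields a subsequence $(n_m)$ along which every quantity in Definition \ref{VK}, applied to $\lambda(n)^2/\nu_n,$ converges, so $(\lambda(n_m)^2/\nu_{n_m})$ is Vershik--Kerov with parameters $\omega$ (automatically with $\gamma=0$ by Lemma \ref{VK_Lemma}(ii)). If $(\lambda(n_m^{(l)})^2/\nu_{n_m^{(l)}})$, $l=1,2,$ are two such VK subsequences with parameters $\omega_l,$ then Lemma \ref{9.17} and the pointwise hypothesis give $\Psi(\omega_l;-x^2/(4k))=\varphi(x)$ for all $x\in\mathbb R$ and $l=1,2,$ and by analytic continuation from $(-\infty,0]$ together with the uniqueness statement of Lemma \ref{unique} I conclude $\omega_1=\omega_2.$ This upgrades the conclusion to the full sequence. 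The only genuinely nontrivial technical point will be the moment comparison in the second step, but it comes essentially for free from the elementary bound $p_2\leq p_1^2$ on non-negative vectors, applied to $z=\lambda(n)^2.$
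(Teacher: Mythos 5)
Your proof is correct and follows the same overall architecture as the paper's: L\'evy continuity applied to the measures $\mu_n$ from \eqref{int_rep}, a bounded moment ratio fed into the Okounkov--Olshanski lemma to control the power sums of $\lambda(n)^2/(n\nu_n)$, then diagonal extraction of a VK subsequence and the uniqueness argument via Lemma \ref{9.17}, analytic continuation and Lemma \ref{unique}. The one genuine difference is the moment comparison. The paper bounds the ratio $\int\xi^8\,d\mu_n\big/\bigl(\int\xi^4\,d\mu_n\bigr)^2$ using the algebraic inequality $g_4\le C g_2^2$ (valid for arbitrary real arguments, exactly as in the type $A$ proof of Lemma \ref{9.11}) and then applies the lemma of \cite{OO98} to the image measure of $\mu_n$ under $\xi\mapsto\xi^2$. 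You instead bound $\int\xi^4\,d\mu_n\big/\bigl(\int\xi^2\,d\mu_n\bigr)^2$, which amounts to $g_2(\lambda(n)^2)\le C\,g_1(\lambda(n)^2)^2$; this fails for general real arguments (e.g.\ $p_1=0$, $p_2>0$ gives $g_1=0$, $g_2>0$), but it is exactly what the elementary inequality $p_2(z)\le p_1(z)^2$ for vectors $z$ with non-negative entries delivers here, since the entries of $\lambda(n)^2$ are squares. So your route exploits the positivity specific to type $B$ to get away with lower moments and a direct application of the lemma on $\mathbb R$, whereas the paper's route is the positivity-free type $A$ argument transported through the substitution $\xi\mapsto\xi^2$. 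Both are valid; yours is marginally more economical, the paper's is more uniform with the type $A$ case. (A minor point common to both versions: when $\lambda(n)=0$ the ratio is $0/0$, but then $\mu_n=\delta_0$ and there is nothing to bound.)
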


\begin{proof} The proof is similar to that of Lemma \ref{9.11}.
For $x\in \mathbb R,$ put 
$$\varphi_n(x) := J_{B_{n}}(\kappa_n;i\lambda(n), x) \, = \, \int_{\mathbb R} e^{ix\xi} d\mu_n(\xi)$$
with certain compactly supported probability measures $\mu_n$ on $\mathbb R$. By the symmetry properties of $J_{B_n}$, the measure $\mu_n$ is even, hence its odd moments vanish. 
Let further $\varphi(x):= \lim_{n\to\infty}\varphi_n(x).$ 
Again by L\'evy's continuity theorem, there exists a probability measure $\mu$ on $\mathbb R$ such that 
$\mu_n \to \mu$ weakly and 
$$ \varphi(x) = \int_{\mathbb R} e^{ix\xi} d\mu(\xi) \quad \text{for all } x\in \mathbb R.$$
Further, the family  $\{\mu_n: n\in \mathbb N\}$ is tight. From  \eqref{rep_B} and formula \eqref{g_C}
we deduce that
$$ \varphi_n(x) = \sum_{j=0}^\infty \frac{g_j\bigl(\lambda(n)^2\bigr)}{4^j (\nu_n)_j (kn)_j}\, (-x)^{2j}.$$   
 This shows that the even moments of $\mu_n$ are given by
$$ \int_{\mathbb R} \xi^{2j} d\mu_n(\xi) = (2j)!\, \frac{g_j(\lambda(n)^2)}{4^{j}(kn)_j(\nu_n)_j}.$$   
As in the proof of Lemma \ref{9.11}, we deduce that  the quotient
$$ \frac{\int_{\mathbb R} \xi^8 d\mu_n(\xi)}{\bigl(\int_{\mathbb R} \xi^4 d\mu_n(\xi)\bigr)^2}$$
is bounded in $n\in \mathbb N.$ Now we conclude from  \cite[Lemma 5.2]{OO98} (employing the Lemma for the image measure of $\mu_n$ under $\xi \mapsto \xi^2$) that
the sequence $\,\bigl(\int_{\mathbb R} \xi^4 d\mu_n(\xi)\bigr)$ is bounded. As $(\nu_n)_2 \sim \nu_n^2 $ and $(kn)_2 \sim (kn)^2$ for $n \to \infty$, it follows that the sequence 
$$\Bigl(g_2\Bigl(\frac{\lambda(n)^2}{n\nu_n}\Bigr)\Bigr)_{n\in \mathbb N}$$ 
is bounded as well. Continuing as in the proof of Lemma \ref{9.11} we obtain that $\bigl(\frac{\lambda(n)^2}{\nu_n}\bigr)$ is a Vershik-Kerov sequence. 
\end{proof}

\begin{proof}[Proof of Theorem \ref{main_B}]
From Theorem  \ref{main} it is clear that the statements (1) and (5) are equivalent. The equivalence of (4) and (5)  follows from expansion \eqref{phi_n_B} and the fact that the Jack polynomials span the algebra of symmetric functions. By Lemma \ref{9.18}, statement (3) implies (1). Finally, Lemma \ref{9.17} shows that statement  (1) implies statement (2), which in turn implies (3).
\end{proof}

We finally want to determine the set of all   parameters $\omega= (\alpha, \beta,0)$ which occur as VK parameters of a non-negative Vershik-Kerov sequence as in Theorem \ref{main_B}. Recall that in the non-negative case, the parameter $\gamma$ is automatically zero due to Lemma \ref{VK_Lemma}.

\begin{proposition} \label{Omega_plus}
The set $\Omega_+$ of all pairs $(\alpha, \beta)$ for which there exists a non-negative VK sequence with parameters $(\alpha, \beta, 0)$ is given by 	
$$ \Omega_+ = \{(\alpha, \beta): \beta \geq 0, \, \alpha = (\alpha_i)_{i\in \mathbb N} \text{ with } \,\alpha_i \in \mathbb R,\,  \alpha_1 \geq \alpha_2 \geq \ldots \geq 0, \, \sum_{i=1}^\infty \alpha_i \leq \beta\,\}.$$ 
\end{proposition}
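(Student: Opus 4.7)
I plan to treat necessity and sufficiency separately; for the latter, I split into a finite-support and an infinite-support case, following the scheme of Proposition~\ref{9.12}.

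For \textbf{necessity}, let $(\mu(n))$ be a non-negative VK sequence (so $\mu(n) \in \overline{C}_n$) with parameters $(\alpha, \beta, \gamma)$; by Lemma~\ref{VK_Lemma}(ii), $\gamma = 0$ is automatic. From $\mu(n)_i \geq 0$ and the decreasing order of $\mu(n)$, one immediately reads off $\alpha_i \geq 0$, $\alpha_1 \geq \alpha_2 \geq \cdots$, and $\beta \geq 0$. The inequality $\sum_i \alpha_i \leq \beta$ follows from the truncation estimate
\[
\sum_{i=1}^N \alpha_i \,=\, \lim_{n \to \infty} \sum_{i=1}^N \frac{\mu(n)_i}{n} \,\leq\, \lim_{n \to \infty} \frac{p_1(\mu(n))}{n} \,=\, \beta,
\]
valid for each fixed $N$, followed by $N \to \infty$.

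For \textbf{sufficiency} with finite support, given $(\alpha, \beta) \in \Omega_+$ set $\rho := \beta - \sum_i \alpha_i \geq 0$. If $\alpha = 0$, take $\mu(n) := (\beta, \ldots, \beta) \in \mathbb{R}^n$: then $\mu(n)_i/n = \beta/n \to 0$, $p_1(\mu(n))/n = \beta$, and $p_2(\mu(n))/n^2 = \beta^2/n \to 0$. If $\alpha$ has last non-zero entry $\alpha_m > 0$ for some $m \geq 1$, then for $n$ so large that $n\alpha_m \geq \rho$ the choice $\mu(n)_i := n\alpha_i$ for $i \leq m$ and $\mu(n)_i := \rho$ for $m < i \leq n$ is non-negative and decreasing, and a direct calculation yields $\mu(n)_i/n \to \alpha_i$, $p_1(\mu(n))/n \to \beta$, $p_2(\mu(n))/n^2 \to \sum_i \alpha_i^2$.

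For \textbf{infinite support} ($\alpha_i > 0$ for all $i$), set $\alpha^{(m)} := (\alpha_1, \ldots, \alpha_m, 0, 0, \ldots)$; since $\sum_{i \leq m}\alpha_i \leq \beta$ we have $(\alpha^{(m)}, \beta) \in \Omega_+$, and the finite-support construction gives a VK sequence $(\mu^{(m)}(n))_n$ with parameters $(\alpha^{(m)}, \beta, 0)$. Choose a sufficiently rapidly growing sequence $n_m \uparrow \infty$ so that the quantities $\mu^{(m)}(n_m)_i/n_m$ (for $i \leq m$), $p_1(\mu^{(m)}(n_m))/n_m$, and $p_2(\mu^{(m)}(n_m))/n_m^2$ each approximate $\alpha_i$, $\beta$, and $\sum_{i \leq m}\alpha_i^2$ to within $1/m$. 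For $n_m \leq n < n_{m+1}$, define
\[
\mu(n) := \bigl(\tfrac{n}{n_m}\mu^{(m)}(n_m), 0, \ldots, 0\bigr) \in \mathbb{R}^n,
\]
exactly as in step (iii) of the proof of Proposition~\ref{9.12}. This scaling-and-padding interpolation preserves the ratios $\mu(n)_i/n$ and the normalized power sums $p_j(\mu(n))/n^j$ for $j = 1, 2$, so the diagonal limits $\alpha_i$, $\beta$, $\sum_i \alpha_i^2$ carry over to the entire sequence.

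The \textbf{main obstacle} is the infinite-support case: one has to realize the whole sequence $(\alpha_i)$ as coordinate limits of a single VK sequence while simultaneously hitting the prescribed values of $\beta$ and $\sum_i \alpha_i^2$. The scaling-and-padding interpolation from Proposition~\ref{9.12} is what makes this work, as it leaves every normalized limit invariant across an interpolation interval; naive padding by zeros alone would cause the ratios $\mu(n)_i/n$ to decay as $n$ grows inside $[n_m, n_{m+1})$, spoiling the per-coordinate convergence.
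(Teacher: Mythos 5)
Your proof is correct and follows essentially the same route as the paper: the identical truncation argument for necessity, the same explicit constant-tail construction for finitely supported $\alpha$ (the paper pads with $\tfrac{n\beta'}{n-m}$ rather than $\rho$, an immaterial difference), and the same scaling-and-padding diagonalization for the infinite-support case, which the paper only sketches by reference to Proposition \ref{9.12} and you spell out correctly.
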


\begin{proof} 1. If $(\alpha, \beta, 0)$ are the VK parameters of a  VK sequence $(\lambda(n))$ with $\lambda(n)_i\geq 0$ for all $i,$  then obviously $\beta \geq 0 $ and $\alpha_1\geq \alpha_2 \geq \ldots \geq 0.$  Moreover, for fixed $N \in \mathbb N$ and $n \geq N$ we have 
$$ \sum_{i=1}^N \alpha_i \,\leq \, \sum_{i=1}^N \Bigl(\alpha_i - \frac{\lambda(n)_i}{n}\Bigr) \, + \, \sum_{i=1}^n \frac{\lambda(n)_i}{n}.$$
As $n \to \infty, $ the first sum tends to $0$ and the second sum tends to $\beta.$ This proves that $\sum_{i=1}^\infty \alpha_i \leq \beta.$

\smallskip

2. Conversely, let $(\alpha, \beta)\in \Omega_+$. In order to construct an associated non-negative VK sequence, we proceed in two steps. 

\smallskip\noindent
(i) Assume that $\alpha$ has at most finitely many non-zero entries. 
If $\alpha \not=0,$ let  $m\in \mathbb N$ be maximal such that $\alpha_i\not= 0$ for $i \leq m.$ If $\alpha = 0,$ let $m:=0.$ Put 
$$ \beta^\prime := \beta - \sum_{i=1}^m \alpha_i \geq 0.$$
For $n >m, $ define $\lambda(n)\in \mathbb R^n$ by
$$ \lambda(n)_i := \begin{cases} n\alpha_i & \text{ if }\, i \leq m  \\
       \frac{n\beta^\prime}{n-m} 	& \text{ if }\, m < i \leq n. 
 \end{cases}$$
Note that the entries of $\lambda(n)$ are non-negative and decreasing for $n$ large enough, say $n \geq n_0$.  It is now straightforward to verify that $(\lambda(n))_{n\geq n_0}$ is a VK sequence with parameters $(\alpha, \beta, 0).$ 

\smallskip\noindent
(ii) Assume that all entries of $\alpha$ are strictly positive. Then a diagonalization argument as in the proof of Proposition \ref{9.12} 
shows that there exists a VK sequence with parameters $(\alpha, \beta, 0).$
\end{proof}

Let us finally turn to consequences in the geometric cases, related to the Cartan motion groups of non-compact Grassmann manifolds. 

For strictly increasing sequences of dimensions $(p_n)_{n\in \mathbb N}, \,(q_n)_{n\in \mathbb N}$ with $ p_n \geq q_n$ consider the sequence of Gelfand  pairs $(G_n, K_n)$ with
\begin{equation}\label{GPB} G_n =(U_{p_n}(\mathbb F)\times  U_{q_n}(\mathbb F))\ltimes M_{p_n,q_n}(\mathbb F), \>\> K_n = U_{p_n}(\mathbb F)\times  U_{q_n}(\mathbb F)\end{equation} 
over $\mathbb F = \mathbb R, \mathbb C, \mathbb H.$ 
It is easily checked that the associated Olshanski spherical pair $(G_\infty, K_\infty)$ is independent of the specific choice of the sequences $(p_n), (q_n),$ and so the same holds for its spherical functions. Indeed, $K= 
U_\infty(\mathbb F) \times U_\infty (\mathbb F)$ and $G_\infty = K \ltimes M_\infty(\mathbb F),$  where $M_\infty(\mathbb F)$ is the space of (in both directions) infinite matrices over $\mathbb F$ with at most finitely many non-zero entries.
The restriction to $q_n = n$ in  Corollary \ref{Cor_B_2} below is therefore not substantial.

	Recall from Section \ref{Bessel} that the positive definite spherical functions of $(G_n, K_n)$, considered as functions on the chamber $\overline C_{q_n} \subset \mathbb R^{q_n} $,  are given by the Bessel functions 
$$  J_{B_{q_n}}(\kappa_n, i\lambda,\,.\,), \> \lambda \in \mathbb R^{q_n} $$
 with the multiplicity 
 $$ \kappa_n = (k^\prime_n, k)= \bigl(\tfrac{d}{2}(p_n-q_n+1) - \tfrac{1}{2}, \tfrac{d}{2}\bigr).$$ So in the geometric cases, the multiplicity parameter $k_n^\prime$ (on the roots $\pm e_i$) naturally varies with $n$. 
 
  We may consider the Olshanski spherical functions of $(G_\infty, K_\infty)$ as $U_\infty(\mathbb F)\times U_\infty(\mathbb F)$-invariant functions on $M_{\infty}(\mathbb F)$ which depend only on the singular values of their argument, or equally as functions on 
 $$\overline C_\infty:= \{ x \in \mathbb R^{(\infty)}: x_1 \geq x_2 \geq \ldots \geq 0\}.$$
 
  The following Corollary goes already back to Pickrell, Theorem 5.14 of \cite{Pi91}, where it was proven by using a slightly different parametrization, and  by different methods. 

\begin{corollary}\label{B_limits}
The set of positive definite spherical functions of the Olshanski spherical pair $(G_\infty, K_\infty),$  considered  as functions on $\overline C_\infty,$ is given by 
$$ \varphi_{(\alpha, \beta)}(x)  = \,
\prod_{j=1}^\infty e^{-\frac{\beta x_j^2}{4}} \prod_{l=1}^\infty \frac{e^{\frac{\alpha_l x_j^2}{4}}}{\bigl(1+ \frac{\alpha_l x_j^2}{2d}\bigr)^{d/2}}, \quad (\alpha, \beta)\in \Omega_+\,.$$
\end{corollary}

\begin{proof} Choose $(G_n, K_n)$ with $q_n = n.$ The proof is then the same as that of Corollary \ref{Cor_A} (1) in the type $A$ case. 
\end{proof}

Let us finally  describe the approximation of the spherical functions of $(G_\infty, K_\infty)$ by spherical functions of an increasing sequence of the Gelfand pairs $(G_n, K_n).$ For simplicity (c.f. the remark above) we now restrict to the case $q_n = n, \, p_n \geq n. $ So in this case, $\,k_n^\prime  = \tfrac{d}{2}(p_n-n+1)-\frac{1}{2}\,$ and $\,\nu_n=\tfrac{d}{2}p_n\,.$

\begin{corollary}\label{Cor_B_2} 
Consider a sequence of Gelfand pairs $(G_n,K_n)$ as in \eqref{GPB} with strictly increasing dimensions $(p_n,q_n),$ where $q_n=n$ and $p_n \geq n.$ 
Assume that  $\varphi_n = J_{B_n}(\kappa_n; i\lambda(n),\,.\,)$ with $\lambda(n) \in \overline C_n$ is a sequence of positive definite spherical functions of $(G_n, K_n)$, considered as
  functions on $\overline C_\infty\supset \overline C_n\,.$  
Then $(\varphi_n)$ converges uniformly on compact subsets of $\,\overline C_\infty$ if and only if
the sequence $\bigl(\frac{\lambda(n)^2}{p_n}\bigr) $ is  Vershik-Kerov. If $(\tfrac{d}{2}\alpha, \tfrac{d}{2}\beta, 0)$ are its VK parameters,  then for $x\in \overline C_\infty,$
$$\lim_{n\to \infty} \varphi_n(x) \,=\, \varphi_{(\alpha, \beta)}(x).$$ 
The convergence is uniform on compact subsets of $\overline C_\infty.$
\end{corollary}

\begin{proof} By use of Theorem \ref{main_B}, the proof is the same as that of Corollary \ref{Cor_A} (2) for the $A$-case.
	\end{proof}

	\begin{remarks} 1. Consider the special case where $q_n = n\,$ and $\,\lim_{n\to \infty} \frac{p_n}{n} =1.$ Then the statement of Corollary \ref{B_limits} remains valid when replacing the   sequence $\bigl(\frac{\lambda(n)^2}{p_n}\bigr)$ by $\bigl(\frac{\lambda(n)^2}{n}\bigr). $
	
	\smallskip
	2. In the group case over $\mathbb F=\mathbb C,$  an approximation
	of the Olshanski spherical functions of $(G_\infty, K_\infty)$ by spherical functions of $(G_n, K_n)$ with $p_n = q_n = n$  was already established in \cite{Ra08}.

		\smallskip
	3. For $\mathbb F = \mathbb C,$ Corollary \ref{B_limits}    is in accordance with  
	results of \cite{Bo19}, where for the semigroup $ \text{Herm}^+  _\infty(\mathbb C)$ of infinite dimensional positive definite matrices over $\mathbb C$, the positive definite Olshanski spherical 
	functions of  $(U_\infty(\mathbb C) \ltimes \text{Herm}^+_\infty(\mathbb C), U_\infty(\mathbb C))$  were determined by semigroup methods and a reduction to the type $A$ case. 
	\end{remarks}

\end{document}